\numberwithin{equation}{section}
\newtheorem{theorem}{Theorem}
\newtheorem{proposition}[theorem]{Proposition}
\newtheorem{lemma}[subsection]{{\bf Lemma}}
\newcommand{\al}{\alpha}
\newcommand{\ga}{\gamma}
\newcommand{\Z}{\mbox{$\mathbb Z$}}
\begin{document}

\title[$S$-parts of sums of terms of linear recurrence sequences]{$S$-parts of sums of terms of linear recurrence sequences} 

\author[Rout]{S. S. Rout}
\address{Sudhansu Sekhar Rout, Institute of Mathematics and Applications\\ Andharua, Bhubaneswar, Odisha - 751029\\ India.}
\email{lbs.sudhansu@gmail.com; sudhansu@iomaorissa.ac.in}

\author[Meher]{N. K. Meher}
\address{Nabin Kumar Meher,  National Institute of Science Education and Research, Bhubaneswar, HBNI, P.O. Jatni, Khurda, Odisha -752050, India.}
\email{mehernabin@gmail.com}

\thanks{2010 Mathematics Subject Classification: Primary 11B37, Secondary 11D61, 11J86. \\
Keywords: Linear recurrence sequence, exponential Diophantine equations, linear forms in logarithms, Baker's method}
\maketitle
\pagenumbering{arabic}
\pagestyle{headings}

\begin{abstract}
Let $S= \{ p_1, \ldots, p_s\}$ be a finite, non-empty set of distinct prime numbers and  $(U_{n})_{n \geq 0}$ be a linear recurrence sequence of integers of order $r$. For any positive integer $k,$ we define $(U_j^{(k)})_{j\geq 1}$ an increasing sequence composed of integers of the form $U_{n_k} +\cdots + U_{n_1}, \  n_k>\cdots >n_1$. Under certain assumptions, we prove that for any $\epsilon >0,$ there exists an integer $n_{0}$ such that $[U_j^{(k)}]_S < \left(U_j^{(k)}\right)^{\epsilon},$  for $ j > n_0,$ where $[m]_S$ denote the $S$-part of the positive integer $m$. On further assumptions on $(U_{n})_{n \geq 0},$ we also compute an effective bound for $[U_j^{(k)}]_S$ of the form $\left(U_j^{(k)}\right)^{1-c}$, where $c $ is a positive constant depends only on $(U_{n})_{n \geq 0}$ and $S.$
\end{abstract}

\section{Introduction}
Let $S = \{p_1, \ldots, p_s\}$ be a finite, non-empty set of distinct prime numbers.  For a positive integer $n$, we write $n = p_1^{e_1}\cdots p_s^{e_s} \cdot M$, where $e_1, \ldots, e_s$ are non-negative integers and $M$ is relatively prime to $p_1, \ldots, p_s$. The $S$-part $[n]_S$ of $n$ is defined as 
\[[n]_S = p_1^{e_1}\cdots p_s^{e_s}.\]

Recently, there have been many advances in the study of $S$-parts of integer polynomials, decomposable forms evaluated at integer points and linear recurrence sequences. For example, Stewart \cite{ste1984} proved a non-trivial, computationally effective upper bounds for $[n(n+1) \cdots (n+
k)]_S$ for any integer $k > 0$ and this result has been extended by Gross and Vincent \cite{gv} to $[f(n)]_S$ for an arbitrary $f(x)\in \mathbb{Z}[x]$ having at least two distinct roots. Motivated by  the work in \cite{gv}, Bugeaud, Evertse and Gy\"{o}ry  \cite{beg} proved that if $f(x)\in \mathbb{Z}[x]$ is a polynomial of degree $n\geq 1$ without multiple roots, then for any $\delta >0$ and any $x\in \mathbb{Z}$ with $f(x)\neq 0$
 one has
 \[[f(x)]_S\ll_{f, S, \delta} (f(x))^{\frac{1}{n} +\delta}.\]

Further, Bugeaud \cite{b18} proved that for any base $b$, there are only finitely many integers not divisible by $b$ which have a given number of non-zero $b$-ary digits and whose prime divisors belong to a given finite set. 

In \cite{Mahler1966}, Mahler obtained a non-trivial upper bound for the $S$-parts of a special family of binary recurrence sequences $(u_n)_{n\geq 0}$. In fact, using Ridout Theorem \cite{rid} (which is a $p$-adic  extension of Roth theorem), Mahler showed that, if $n$ is large enough, then  $[u_n]_S< |u_n|^{\epsilon}$. Mahler also observed that, his result implies that $P[u_n]$ tends to infinity as $n$ tends to infinity,  where $P[m]$ denotes the largest prime factor of the integer $m$ with $P[0] = P[\pm1] = 1$.  Later in \cite{be17}, Bugeaud and Evertse, extends Mahler's result to every non-degenerate recurrence sequence $(U_n)_{n\geq 0}$ of integers. In particular, using $p$-adic Schmidt Subspace Theorem they proved that for every sufficiently large $n$ and for any $\epsilon >0$
\[[U_n]_S\leq  |U_n|^{\epsilon}.\]

Let $(F_n)_{n\geq 0}$ denote the Fibonacci sequence defined by 
\[F_0 = 0\, F_1=1 \quad \mbox{and}\quad  F_{n+1} = F_{n} +F_{n-1}\] for $n\geq 0$. Every positive integer $N$ can be written uniquely as a sum 
\[N = \epsilon_{\ell}F_{\ell} + \cdots + \epsilon_{1}F_{1},\] 
with  $\epsilon_{\ell} =1, \epsilon_{j}\in \{0, 1\}$ and $\epsilon_{j} \epsilon_{j+1} =0$ for $j=1, \ldots, \ell-1$. This representation is called its {\em Zekendorf representation}. In \cite{b19}, Bugeaud proved that there are only finitely many integers which have a given number of digits in their Zekendorf representation and whose prime divisors belong to a given set. Like Zekendorf representation, there is a well defined representation of any positive integer $N$ in the form 
\begin{equation}\label{eq7g}
N = \sum_{j=0}^{L(N)} \epsilon_j\cdot U_j,
\end{equation}
where $U=(U_n)_{n\geq 0}$ is linear recurrence sequence of order $r$, the so called $U$-ary representation of $N$ with digits $\epsilon_j$. For detail study about such expansion one may refer  \cite{pt, gt}.

In this paper, we extend the results of Bugeaud and Evertse \cite{be17} and Bugeaud \cite{b19}. 
Here we consider a linear recurrence sequences of integers  $(U_n)_{n\geq 0}$ of arbitrary order. Let  $k \geq 1$ be a positive integer. We denote by $(U_j^{(k)})_{j\geq 1}$  the sequence, arranged in increasing order, of all positive integers which have at most $k$-digits in their expansions as in equation \eqref{eq7g}.  That is,  $(U_j^{(k)})_{j\geq 1}$ an increasing sequence composed of the integers of the form 
\begin{equation*}\label{eq7}
U_{n_k} +\cdots + U_{n_1}, \quad  n_k>\cdots >n_1.
\end{equation*}

We provide an upper bound for $S$-parts of the sequence $(U_j^{(k)})_{j\geq 1}$.  First we give a general ineffective result. Then, under some technical assumptions, we provide an effective upper bound  for $S$-parts of  the sequence $(U_j^{(k)})_{j\geq 1}$.

\section{Notation and main results}\label{sec2}

For a positive integer $r$, we define the linear recurrence sequence $(U_{n})_{n \geq 0}$ of order $r$ as follows: 
\begin{equation}\label{eq4}
U_{n} = a_1U_{n-1} + \dots +a_rU_{n-r}
\end{equation}
where $a_1,\dots, a_r \in \Z$ with $a_r\neq 0$ and $U_0,\dots,U_{r-1}$ are integers not all zero. The characteristic polynomial of $U_n$ is defined by
\begin{equation}\label{eq5}
f(x):= x^r - a_1x^{r-1}-\dots-a_r = \prod_{i =1}^{t}(x - \alpha_i)^{m_i}\in \Z[X]
\end{equation}
where $\alpha_1,\dots,\alpha_t$ are distinct algebraic numbers and $m_1,\dots, m_t$ are positive integers. Then  $U_{n}$ (see e.g. Theorem C1 in part C of \cite{st}) has a nice  representation of the form
\begin{equation}\label{eq6}
U_n=\sum_{i=1}^t f_i(n)\alpha_{i}^n \ \ \ \text{for all}\ n\geq 0.
\end{equation}
Here $f_i(x)$ is a polynomial of degree $m_i -1$ $(i=1,\dots,t)$ and this representation is uniquely determined. We call the sequence $(U_n)_{n \geq 0}$ {\it simple} if $t=r$. The sequence $(U_{n})_{n \geq 0}$ is called {\it degenerate} if there are integers $i, j$ with $1\leq i< j\leq t$ such that $\alpha_i/\alpha_j$ is a root of unity; otherwise it is called {\it non-degenerate}. Here, we assume that $t\geq 2,$ the polynomials $f_1(x),\ldots, f_t(x)$ are non-zero and $(U_{n})_{n \geq 0}$ is non-degenerate. 

If $|\alpha_1|>|\alpha_j|$ for all $j$ with $2\leq j\leq t$, then we say that $\alpha_1$ is a dominant root of the sequence $(U_n)_{n\geq 0}$.  The {\it zero multiplicity} of the sequence $U_n$ is the number of  $n\in \Z $ for which $U_n=0$. A classical theorem of {\em Skolem-Mahler-Lech} says that a non-degenerate linear recurrence sequence has finite zero multiplicity. 

In our first result, we compute a non-trivial upper bound for  the integer sequence  $(U_j^{(k)})_{j\geq 1}.$  
 \begin{theorem}\label{th1}
Let $(U_n)_{n\geq 0}$ be a non-degenerate recurrence sequence of integers defined in \eqref{eq4} and assume that
\begin{equation}\label{eq6d}
|\alpha_1|>1> |\alpha_j|, \quad \mbox{for}\quad j=2, \ldots, t.
\end{equation}
Let $k$ be a positive integer, $\epsilon$ a positive real number and $S: = \{p_1, \ldots, p_s\}$ be a finite, non-empty set of distinct prime numbers. If $\gcd(p_1\cdots p_s, a_1, \cdots, a_r)=1$ then, we have 
\begin{equation}\label{eq7a}
[U_j^{(k)}]_S < \left(U_j^{(k)}\right)^{\epsilon},
\end{equation}
for every sufficiently large integer $j$.
\end{theorem}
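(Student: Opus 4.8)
The plan is to follow the strategy of Bugeaud and Evertse \cite{be17} for a single term and to graft onto it an induction on the number $k$ of summands. Write $N = U_j^{(k)} = U_{n_k}+\cdots+U_{n_1}$ with $n_k>\cdots>n_1$, and substitute the closed form \eqref{eq6} to obtain
\[
N=\sum_{\ell=1}^{k}\sum_{i=1}^{t} f_i(n_\ell)\,\alpha_i^{n_\ell}.
\]
Under hypothesis \eqref{eq6d} the root $\alpha_1$ is dominant and is the only root of modulus exceeding $1$, so the terms attached to $\alpha_2,\dots,\alpha_t$ stay bounded while $f_1(n_k)\alpha_1^{n_k}$ dominates; hence $\log|N|\sim n_k\log|\alpha_1|$ and, in particular, $H:=|N|\asymp|\alpha_1|^{n_k}$. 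Since $[N]_S=\prod_{i=1}^{s}|N|_{p_i}^{-1}$, the inequality \eqref{eq7a} is equivalent to $\prod_{i=1}^{s}|N|_{p_i}>|N|^{-\epsilon}$ for $j$ large, and I would argue by contradiction, supposing that $[N]_S\ge|N|^{\epsilon}$ for infinitely many admissible tuples $\mathbf n=(n_1,\dots,n_k)$.

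Let $K=\Q(\alpha_1,\dots,\alpha_t)$ and let $\mathcal S$ consist of the archimedean places of $K$ together with all places lying above $p_1,\dots,p_s$. To each tuple $\mathbf n$ I attach the point $\mathbf x_{\mathbf n}\in K^{kr}$ whose coordinates are the monomials $n_\ell^{a}\alpha_i^{n_\ell}$ (with $1\le\ell\le k$, $1\le i\le t$, $0\le a\le m_i-1$), so that the fixed linear form $L$ built from the coefficients in \eqref{eq6} satisfies $L(\mathbf x_{\mathbf n})=N$. At each place of $\mathcal S$ I would take a system of $kr$ linearly independent forms: the coordinate forms everywhere, except that at each finite place $w\mid p_i$ one coordinate form (the one carrying $\alpha_1^{n_k}$, which is legitimate since the leading coefficient of $f_1$ is nonzero) is replaced by $L$. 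The coprimality hypothesis $\gcd(p_1\cdots p_s,a_1,\dots,a_r)=1$ enters precisely here: it forces $f(x)\not\equiv x^{r}\pmod{p_i}$, so for every $w\mid p_i$ at least one root is a $w$-adic unit, whence $\|\mathbf x_{\mathbf n}\|_w=1$ and the height is carried entirely by the archimedean places, $H(\mathbf x_{\mathbf n})\asymp H$. Evaluating the double product, the $L$-forms contribute exactly $\prod_i|N|_{p_i}=[N]_S^{-1}$, while the coordinate forms, being algebraic integers, combine through the product formula (the missing finite places only helping) into a factor of size $H^{o(1)}$. Under the contrary assumption this gives $\prod_{v\in\mathcal S}\prod_{j}\|L_{j,v}(\mathbf x_{\mathbf n})\|_v/\|\mathbf x_{\mathbf n}\|_v\le H^{-\delta}$ for a fixed $\delta>0$, so the $p$-adic Subspace Theorem confines all the points $\mathbf x_{\mathbf n}$ to finitely many proper subspaces of $K^{kr}$.

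It then remains to rule out each subspace, which I expect to be the main obstacle. A fixed proper subspace containing infinitely many $\mathbf x_{\mathbf n}$ yields a nontrivial identity $\sum_{\ell=1}^{k}g_\ell(n_\ell)=0$, valid for infinitely many tuples, where each $g_\ell(m)=\sum_{i,a}\lambda_{\ell,i,a}\,m^{a}\alpha_i^{m}$ is a fixed exponential polynomial. I would first pass, by pigeonhole, to an infinite subfamily on which each gap $n_\ell-n_{\ell-1}$ is either bounded or tends to infinity and the indices lie in fixed residue classes. If the $\alpha_1$-component of $g_k$ is nonzero, the separation \eqref{eq6d} makes $g_k(n_k)$ outgrow the remaining terms once $n_k-n_{k-1}$ is large, which is impossible; together with the fact that a non-degenerate one-variable exponential polynomial has only finitely many zeros unless it vanishes identically, this forces $g_k$ to lose its $\alpha_1$-component, lowering the number of genuinely present summands, so an induction on $k$ (whose base case $k=1$ is exactly the theorem of \cite{be17}) yields the required contradiction. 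The delicate points throughout are the combinatorial control of how the indices $n_1,\dots,n_k$ may conspire in a single vanishing relation and the bookkeeping needed to certify the Subspace inequality with an explicit $\delta$; it is the non-degeneracy of $(U_n)_{n\ge0}$ together with the moduli separation \eqref{eq6d} that forces every case to collapse.
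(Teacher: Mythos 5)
Your overall architecture is in the same family as the paper's: both proofs ultimately rest on the $p$-adic Subspace Theorem applied to the expansion of $N=U_{n_k}+\cdots+U_{n_1}$ into the $kt$ terms $f_i(n_\ell)\alpha_i^{n_\ell}$, and both use the hypothesis $\gcd(p_1\cdots p_s,a_1,\dots,a_r)=1$ in the same way (to force $\max_i|\alpha_i|_w=1$ at every place $w$ above a prime of $S$, so that the $S$-part has no ``main term''). The difference is that the paper does not run the Subspace Theorem by hand: it quotes Evertse's packaged inequality for sums of $S$-units (Proposition \ref{lem2}, from \cite{ev}), whose only hypothesis is that no subsum of the $x_1+\cdots+x_{kt}$ vanishes, and it verifies that hypothesis for all but finitely many tuples via the Peth\H{o}--Tichy bound on $S$-unit equations (Proposition \ref{lem1} and Lemma \ref{lem5}). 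The two places where your plan is not yet a proof are exactly the places where the paper leans on those black boxes.

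First, the step ``rule out each subspace'' cannot succeed as stated: a proper subspace of $K^{kr}$ can contain infinitely many of your points $\mathbf{x}_{\mathbf n}$. For instance the relation $X_{k,1,0}-\alpha_1^{c}X_{k-1,1,0}=0$ is satisfied by every tuple with $n_k-n_{k-1}=c$, so membership in a subspace yields no contradiction. What one must show instead is that the conclusion $[N]_S<N^{\epsilon}$ persists for the points lying in each exceptional subspace, by using the linear relation to merge terms with bounded gaps into a single exponential polynomial (a new recurrence with the same characteristic roots) and descending to a configuration with fewer free indices. Your sketch of the induction only treats the branch where the $\alpha_1$-component of $g_k$ is forced to vanish; the bounded-gap branch, where nothing vanishes but two summands coalesce, is the generic one and is missing. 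This descent is precisely what Evertse's theorem internalizes, and its termination is what requires the non-vanishing-subsum input that the paper supplies through Lemma \ref{lem5}. Second, you assert $\log|N|\sim n_k\log|\alpha_1|$ from ``dominance,'' but \eqref{eq6d} permits $\alpha_1<-1$, in which case the $U_{n_\ell}$ alternate in sign and elementary estimates do not exclude $|N|$ being much smaller than $|\alpha_1|^{n_k}$; without this lower bound the final conversion of $H(\mathbf{x}_{\mathbf n})^{\epsilon'}$ into $N^{\epsilon}$ is unjustified. The paper obtains the needed two-sided bound \eqref{eq22a} as an output of the same $S$-unit inequality, applied with $S_2$ equal to the set of infinite places, not as a triviality.
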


The assumption in equation $\eqref{eq6d}$ can be achieved from the relation \eqref{eq4} with $a_1> a_2 >\cdots >a_r >0$ (see \cite{br}).   However, Theorem \ref{th1} is ineffective as the proof of this depends on a general result on $S$-unit equations and which further rely on the $p$-adic Schmidt Subspace Theorem. Therefore, we can not compute the set of $ (n_1, \ldots, n_k)$ for which  \eqref{eq7a} does not hold. 

Our next theorem gives, under certain conditions, an effective finiteness result and the main tool for this is Baker's theory and we use an argument from \cite{mr}.
\begin{theorem}\label{th2}
Let $(U_n)_{n\geq 0}$ be a non-degenerate recurrence sequence of integers of order $r\geq 2$ and has a dominant root $\alpha_1$ with $1<\alpha_1\notin \mathbb{Z}$. Let $k\geq 1$ be a positive integer and $S: = \{p_1, \ldots, p_s\}$ be a finite, non-empty set of distinct prime numbers. Then there exist effectively computable positive numbers $c_1$ and $c_2$ depending $(U_n)_{n\geq 0}$ and $S$, such that 
\begin{equation}\label{eq7ab}
[U_j^{(k)}]_S < \left(U_j^{(k)}\right)^{1-c_1},
\end{equation}
for every $j \geq c_2$.
\end{theorem}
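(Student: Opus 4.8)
The plan is to prove Theorem~\ref{th2} by reducing the problem to an effective lower bound for linear forms in logarithms (Baker's theory), exactly as one handles $S$-part estimates for recurrence sequences with a dominant root. First I would fix $j$ large and write $N := U_j^{(k)} = U_{n_k} + \cdots + U_{n_1}$ with $n_k > \cdots > n_1$. Using the representation \eqref{eq6} and the dominant-root hypothesis $|\alpha_1| > |\alpha_i|$ for $i \geq 2$, I would first establish a two-sided estimate
\begin{equation*}
c_3\, \alpha_1^{n_k} < N < c_4\, \alpha_1^{n_k},
\end{equation*}
for effectively computable constants $c_3, c_4 > 0$; this shows that $N$ is governed by the single term $U_{n_k}$ and that $\log N = n_k \log \alpha_1 + O(1)$. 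The point of the condition $1 < \alpha_1 \notin \mathbb{Z}$ will become clear below: it guarantees that $\alpha_1$ is a genuine algebraic (irrational, or at least non-rational-integer) number, so the linear form in logarithms we build is nonzero and not degenerate.

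Next I would analyze the $S$-part. Write $[N]_S = p_1^{e_1} \cdots p_s^{e_s}$, and set $\Lambda := \log [N]_S = \sum_{i=1}^{s} e_i \log p_i$. The goal \eqref{eq7ab} is equivalent to showing $\Lambda < (1 - c_1)\log N$, i.e. that $[N]_S$ cannot be too close to $N$ itself. The strategy is to suppose, for contradiction toward a quantitative bound, that $[N]_S$ is large, say $[N]_S \geq N^{1-c_1}$, and derive an incompatible linear-forms inequality. For each prime $p_\ell \in S$, the $p_\ell$-adic valuation $v_{p_\ell}(N) = e_\ell$ enters, and the key observation is that $v_{p_\ell}(N) = v_{p_\ell}\!\left(U_{n_k} + \cdots + U_{n_1}\right)$ can be bounded from above using a $p$-adic linear form in logarithms: the quantity $\alpha_1^{n_k} - (\text{the rest})$, after clearing the polynomial coefficients $f_i(n)$, produces an $S$-unit-type relation whose $p$-adic size is controlled by Yu's $p$-adic theory of linear forms in logarithms. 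This is the place where I invoke ``an argument from \cite{mr}'': the bounded number of summands $k$ is fixed, so the number of distinct exponential terms appearing is bounded independently of $j$, and each $p_\ell$-adic valuation $e_\ell$ is bounded by $c_5 \log n_k \leq c_6 \log\log N$ via Baker–Yu estimates.

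The main technical step is therefore the combination
\begin{equation*}
\log [N]_S = \sum_{\ell=1}^{s} e_\ell \log p_\ell \leq \sum_{\ell=1}^{s} \left(c_5 \log n_k\right)\log p_\ell \leq c_7 \log\log N,
\end{equation*}
which is far stronger than the desired $(1-c_1)\log N$ for large $N$ and in fact yields the conclusion with essentially any $c_1 < 1$ once $\log\log N = o(\log N)$. Comparing with $\log N \asymp n_k \log\alpha_1$, one concludes $[N]_S < N^{\epsilon}$ effectively, which certainly gives \eqref{eq7ab}; choosing $c_1$ and the threshold $c_2$ explicitly from the constants in Yu's estimate completes the proof.

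The hard part will be the $p$-adic valuation bound, namely showing $v_{p_\ell}\!\left(U_{n_k} + \cdots + U_{n_1}\right) \ll \log n_k$ uniformly. The subtlety is that the sum $U_{n_k} + \cdots + U_{n_1}$ is not itself a power of $\alpha_1$, so one must first factor out the dominant contribution and write the relevant $p$-adic form as a linear combination of logarithms of the algebraic numbers $\alpha_1, \ldots, \alpha_t$ and the $p_\ell$; the non-degeneracy hypothesis ensures no ratio $\alpha_i/\alpha_j$ is a root of unity, so the linear form does not vanish identically, and the condition $\alpha_1 \notin \mathbb{Z}$ (together with $\alpha_1$ dominant and $r \geq 2$) prevents the degenerate situation where $N$ could be an exact $S$-unit. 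Handling the polynomial factors $f_i(n)$ of degree $m_i - 1$ inside the $p$-adic logarithmic form, and keeping all constants effective in terms of $k$, $(U_n)_{n\geq 0}$, and $S$ only (and not on $j$), is where the care is required; this is precisely the role of the effective machinery from \cite{mr}.
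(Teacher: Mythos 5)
Your overall framing (reduce to Baker-type lower bounds and use the dominant root to get $\log N \asymp n_k\log\alpha_1$ for $N=U_j^{(k)}=U_{n_k}+\cdots+U_{n_1}$) is a reasonable starting point, but the core of your argument rests on a step that does not work and is not what the paper does. You propose to bound each valuation $v_{p_\ell}(U_{n_k}+\cdots+U_{n_1})$ by $c\log n_k$ using Yu's $p$-adic theory, and conclude $\log [N]_S \ll \log\log N$. That conclusion is far too strong: it would give an \emph{effective} proof of $[U_j^{(k)}]_S \leq (U_j^{(k)})^{\epsilon}$, which is exactly Theorem \ref{th1}, proved in the paper only ineffectively via the Subspace Theorem; no effective bound of that quality is known for recurrences with $t\geq 3$ characteristic roots, nor for sums of $k\geq 2$ terms. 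The technical obstruction is that $p$-adic linear-form estimates apply to quantities of the shape $\gamma_1^{b_1}\cdots\gamma_m^{b_m}-1$, i.e.\ a single monomial minus one. The sum $\sum_{i,j} f_j(n_i)\alpha_j^{n_i}$ contains $kt$ exponential terms; in the archimedean world one can peel them off one at a time because they are ordered in size ($|\alpha_2|^{n}\ll \alpha_1^{n}$ and $\alpha_1^{n_i}\ll\alpha_1^{n_k}$), but there is no analogous $p$-adic ordering, so the reduction to a single monomial, and hence the bound $e_\ell\ll\log n_k$, fails except in the very special case $t=2$, $k=1$. The fact that your intermediate estimate is ``far stronger than the desired $(1-c_1)\log N$'' should have been a warning sign.

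The paper's actual mechanism is different and avoids $p$-adic linear forms altogether: write $N=[N]_S\cdot M$ and insert the cofactor $M$ (together with $f_1(n_k)^{-1}$) as one of the algebraic numbers in a purely archimedean Matveev linear form $\Lambda_1 = M\bigl(\prod_j p_j^{r_j}\bigr) f_1(n_k)^{-1}\alpha_1^{-n_k}-1$. Matveev's lower bound then carries a factor $\log A \approx \max\{\log|M|,2\}$, while the dominant-root structure gives the upper bound $\log|\Lambda_1| \ll -(n_k-n_{k-1})\log n_k$; iterating over $i=k-1,\dots,1$ (Lemma \ref{lem14}) yields $n_k \ll (\cdots)^{k}\log A$. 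Hence either $n_k$ is bounded, or $\log|M|\gg n_k \asymp \log N$, i.e.\ $M\geq N^{c_1}$ and $[N]_S=N/M\leq N^{1-c_1}$. Note also that the hypothesis $\alpha_1\notin\mathbb{Z}$ is used concretely (Lemma \ref{lem13a}) to prove that the forms $\Lambda_i$ are nonzero, by applying a conjugation sending $\alpha_1$ to some $\alpha_m\neq\alpha_1$ and comparing archimedean sizes, not merely to ``prevent $N$ from being an $S$-unit.'' To repair your write-up you would need to replace the $p$-adic valuation bound by this device of putting $M$ itself into the archimedean linear form.
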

By proceeding in a similar line of argument as in  \cite{bk18}, we obtain the following result. Hence we omit the details.
\begin{theorem}\label{th3}
Let $\epsilon >0$ and $k \geq 1$ be an integer. Then there exist an effectively computable positive number $N$, depending on $k$ and $\epsilon$ such that 
\begin{align}\label{eq7abc}
P[U_j^{(k)}] > \left( \frac{1}{k} - \epsilon\right) \log \log U_{j}^{(k)} \frac{\log \log \log U_{j}^{(k)}}{ \log\log \log \log U_{j}^{(k)}} \quad \text{for} \  j > N.
\end{align}
\end{theorem}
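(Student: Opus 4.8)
The plan is to set $M:=U_j^{(k)}$, let $P:=P[M]$ be its largest prime factor, and exploit that $M$ is composed only of primes $\le P$. Writing $M=\prod_{i=1}^{s}p_i^{e_i}$ with $p_1<\cdots<p_s=P$ and $s=\omega(M)\le\pi(P)$, the crucial point is that $M$ coincides with its own $S$-part for $S=\{p_1,\ldots,p_s\}$. I would then re-run the effective linear-forms-in-logarithms machinery underlying the proof of Theorem \ref{th2}, now keeping explicit track of how every constant depends on $s$ and on $p_s=P$, and balance that dependence against $P$ by means of the prime number theorem. Because $[M]_S=M$ is large, an upper bound for $[M]_S$ expressed in terms of $s$ and $P$ is converted into a lower bound for $P$.

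Concretely, the dominant root gives $\log M=n_k\log\alpha_1+O(\log n_k)$, so $n_k\asymp\log M$. Relating $M=\prod p_i^{e_i}$ to the recurrence produces a linear form $\Lambda=\sum_{i=1}^{s}e_i\log p_i-n_k\log\alpha_1-\log f_1(n_k)$ in $s+1$ logarithms with $\max(e_i,n_k)\ll\log M$. Applying Matveev's estimate (or Yu's $p$-adic analogue) together with the height bounds $A_i\ll\log p_i\le\log P$ yields an inequality whose right-hand side is governed by $\prod_{i\le s}\log p_i$ times a factor exponential in $s$. The feature absent when $k=1$ is that the $k$ summands of $M$ enter the height bookkeeping and raise this product to essentially its $k$-th power. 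Invoking $\sum_{p\le P}\log\log p\sim\frac{P\log\log P}{\log P}$ and $\pi(P)\sim P/\log P$, I would collapse everything to an inequality of the shape
\[
\log\log M \le (1+o(1))\,k\,\frac{P\log\log P}{\log P}.
\]

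Inverting this is then routine: it gives $P\ge\frac{1+o(1)}{k}\frac{\log P}{\log\log P}\log\log M$, and a short bootstrap (first $P\ll(\log\log M)^{1+o(1)}$, hence $\log P=(1+o(1))\log\log\log M$ and $\log\log P=(1+o(1))\log\log\log\log M$) produces
\[
P > \Big(\tfrac1k-\epsilon\Big)\,\log\log M\,\frac{\log\log\log M}{\log\log\log\log M}
\]
for all $j$ beyond an effective threshold $N=N(k,\epsilon)$. The main obstacle, and the reason this is genuinely harder than the single-term case, is that for a sum $M=U_{n_k}+\cdots+U_{n_1}$ the natural linear form is not automatically small: one must control it through the gaps $n_k-n_{k-1},\ldots,n_2-n_1$, separating the case where the top term dominates from the case where several terms are comparable. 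It is precisely this $k$-fold analysis that injects the factor $k$ into the final constant and that must be carried out uniformly in $s$ and $P$ so as to match the prime-counting estimates; this is exactly the point where I would follow the argument of \cite{bk18}.
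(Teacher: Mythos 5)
Your proposal is correct and follows essentially the route the paper intends: the authors omit the proof of this theorem entirely, saying only that it follows ``by proceeding in a similar line of argument as in \cite{bk18}'', and your sketch --- taking $S$ to be the full set of primes dividing $U_j^{(k)}$ so that the coprime cofactor is trivial, re-running the effective machinery of Lemma \ref{lem14} with explicit dependence on $s$ and $Q=\prod_{i\le s}\log p_i$ (whence the $k$-th power), and then converting via $\pi(P)\sim P/\log P$, $\sum_{p\le P}\log\log p\sim P\log\log P/\log P$ and a short bootstrap --- is precisely the Bugeaud--Kaneko argument being invoked. The only point worth making explicit is that the hypotheses of Theorem \ref{th2} (a dominant root $\alpha_1$ with $1<\alpha_1\notin\mathbb{Z}$) must be assumed here as well, which your use of $\log U_j^{(k)}=n_k\log\alpha_1+O(\log n_k)$ implicitly does.
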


\section{Preliminaries}
Let $K:= \mathbb{Q}(\alpha_1, \ldots, \alpha_t)$ be an algebraic number field and $\mathcal{O}_{K}$ be its ring of integers. Let $V_{K}$ be the set of places of $K$.
For $v\in V_{K}$, if $v$ is an infinite place, then 
\begin{equation}\label{7abc}
|x|_v := |x|^{[K_v:\mathbb{R}]/[K:\mathbb{Q}]}  \quad \hbox{for} \ x \in \mathbb{Q},
\end{equation}
where as if $v$ is a finite place lying above the prime $p$, then 
\begin{equation}\label{7abcd}
|x|_v := |x|^{[K_v:\mathbb{Q}_{p}]/[K:\mathbb{Q}]}  \quad \hbox{for} \ x \in \mathbb{Q}.
\end{equation}

 For $p \in V_{\mathbb{Q}}$, we choose a normalized absolute value $|\cdot |_p$ in the following way. If $p = \infty$, then $|\cdot |_p$ is the ordinary absolute value on $\mathbb{Q}$, and if $p$ is prime, then the absolute value is the $p$-adic absolute value on $\mathbb{Q}$, with $|p|_p= 1/p$. In either case, we have 
\begin{equation}\label{eq8}
|x|_v:= |N_{K_v/\mathbb{Q}_p}(x)|_p^{1/[K:\mathbb{Q}]},  
\end{equation} 
for $x \in K$ and $v\mid p$. These absolute values satisfy the product formula
\[\prod_{v\in V_K}|x|_v =1,\]
for any $x \in K\setminus \{0\}$. Moreover, if $x\in \mathbb{Q}$, then $\prod_{v\mid \infty}|x|_v = |x|$ and $\prod_{v\mid p}|x|_v = |x|_p$ where the product is taken over all infinite places of $K$ and all places of $K$ lying above the prime number $p$, respectively. We then define height $h(x)$ of a non-zero $x$ in $K$  by 
\begin{equation}\label{eq9}
h(x) = \sum_{v\in V_K} \log \max \{1, |x|_v\}.
\end{equation}

Let $T$ be a finite set of places of $K$, containing all infinite places. Define the ring of $T$-integers  and $T$-units of $K$ by
\begin{equation}\label{eq10}
O_T :=  \{x \in K : |x|_v\leq 1\quad \mbox{for}\quad v \in V_K\setminus T\}
\end{equation}
and 
\begin{equation}\label{eq10a}
O_T^{*} :=  \{x \in K : |x|_v =1 \quad \mbox{for}\quad v \in V_K\setminus T\}, 
\end{equation} respectively.
Further, we define 
\begin{equation}\label{eq11}
H_T(x_1, \ldots, x_n):= \prod_{v\in T}\max \{ |x_1|_v, \ldots, |x_n|_v\}
\end{equation}
for $x_1, \ldots, x_n \in O_T$.

\subsection{Auxiliary results}
In the proof of Theorem \ref{th1}, we use the following result on $S$-unit equations of Peth\H{o} and  Tichy \cite{pt}.
\begin{proposition}{\cite[Theorem 1]{pt}}\label{lem1}
Let $K$ be an algebraic number field of degree $d$ over $\mathbb{Q}$. Let $\alpha_1, \ldots, \alpha_k\in K$ be multiplicatively independent of degrees $m_1, \ldots, m_k$ respectively, $\beta_{i,j} \in K\setminus \{0\}$ for $i=1, \ldots, k, j=1, \ldots, m_i$. Set $M = m_1+\cdots +m_k$. Let $S_1$ be the set of places of $K$ (including the infinite places) having $ s $ elements such that $|\alpha_i|_v =1$ for all $i=1, \ldots, k$ and $v\notin S_1$. 
Then the Diophantine equation 
\begin{equation}\label{eq12}
\sum_{\ell=1}^{k} \sum_{i=1}^{m_{\ell}} \beta_{\ell, i}\alpha_{\ell}^{\lambda_{\ell, i}}=0
\end{equation}
has at most $(4sd!)^{2^{36Md!}}s^6$ solutions in $(\lambda_{1, 1}, \ldots, \lambda_{1, m_1}, \ldots, \lambda_{k, 1}, \ldots, \lambda_{k, m_k})$ such that 
\[\sum_{i\in J} \beta_{\ell, i}\alpha_{\ell}^{\lambda_{\ell, i}}\neq 0\]
holds for every $J \subseteq \{1, \ldots, m_{\ell}\}, \ell = 1, \ldots, k$.
\end{proposition}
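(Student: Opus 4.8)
The plan is to read \eqref{eq12} as a linear--exponential equation and to reduce it to a quantitative $S_1$-unit equation count. Since $\beta_{\ell,i}\neq 0$ and $\alpha_\ell\neq 0$, every term $u_{\ell,i}:=\beta_{\ell,i}\alpha_\ell^{\lambda_{\ell,i}}$ is nonzero, so I would divide \eqref{eq12} through by the single term $u_{1,1}=\beta_{1,1}\alpha_1^{\lambda_{1,1}}$. Writing $\gamma_{\ell,i}:=\beta_{\ell,i}/\beta_{1,1}$ (fixed) and $w_{\ell,i}:=\alpha_\ell^{\lambda_{\ell,i}}\alpha_1^{-\lambda_{1,1}}$, the equation becomes the inhomogeneous unit equation $\sum_{(\ell,i)\neq(1,1)}\gamma_{\ell,i}w_{\ell,i}=-1$ in the $M-1$ unknowns $w_{\ell,i}$. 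The hypothesis $|\alpha_i|_v=1$ for $v\notin S_1$ says precisely that each $\alpha_\ell$ is an $S_1$-unit, hence each $w_{\ell,i}\in O_{S_1}^{\ast}$; this is the step that turns the exponents $\lambda_{\ell,i}$ into units and makes $s=|S_1|$ (through the rank $s-1$ of $O_{S_1}^{\ast}$) the natural size parameter of the problem.

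Next I would show that counting exponent tuples $\boldsymbol\lambda=(\lambda_{\ell,i})$ is the same as counting the associated unit vectors $(w_{\ell,i})$, equivalently the projective points $[\,u_{\ell,i}\,]\in\mathbb{P}^{M-1}$. If two tuples $\boldsymbol\lambda,\boldsymbol\lambda'$ give the same vector then $\alpha_\ell^{\lambda'_{\ell,i}-\lambda_{\ell,i}}=\alpha_1^{\lambda'_{1,1}-\lambda_{1,1}}$ for all $\ell,i$; for $k\ge 2$ the multiplicative independence of $\alpha_1,\alpha_2$ forces all these exponent differences to vanish, so $\boldsymbol\lambda=\boldsymbol\lambda'$ and the correspondence is injective. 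The one-base case $k=1$, where \eqref{eq12} is a single lacunary relation $\sum_i\beta_{1,i}\alpha_1^{\lambda_{1,i}}=0$ with no vanishing subsum, I would treat separately, its solution count being bounded purely in terms of $m_1$.

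With injectivity established, the core is to bound the number of non-degenerate solutions of the $S_1$-unit equation $\sum\gamma_{\ell,i}w_{\ell,i}=-1$ in $M-1$ variables. For this I would invoke the quantitative Subspace Theorem in the form of the Evertse--Schlickewei--Schmidt (or Amoroso--Viada) bound for unit equations, which counts the solutions with no vanishing proper subsum in terms of the number of variables, the degree $d$, and the rank $s-1$; this produces a bound of the tower shape $(4sd!)^{2^{36Md!}}$. Two reconciliations remain. First, the hypothesis in \eqref{eq12} forbids only the \emph{within-block} subsums $\sum_{i\in J}\beta_{\ell,i}\alpha_\ell^{\lambda_{\ell,i}}$ from vanishing, whereas the theorem needs full non-degeneracy; I would bridge this by partitioning the $M$ terms into their minimal vanishing subsums, applying the quantitative bound to each block of the partition, and summing over all partitions, the extra combinatorial and normalization factors being absorbed into the displayed $s^6$. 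Second, to replace field-of-definition data by $d$ and to bring in the conjugates of the $\alpha_\ell$, I would pass to the Galois closure of $K/\mathbb{Q}$, of degree at most $d!$, which is the source of the $d!$ in the exponent tower $2^{36Md!}$.

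The main obstacle will be quantitative rather than structural: landing on exactly $(4sd!)^{2^{36Md!}}s^6$ forces one to invoke the Subspace-Theorem bound in its sharpest available form, to track the rank of the relevant unit group (at most $(M-1)(s-1)$, and at most $s d!$ after the Galois passage) through every estimate, and to control the combinatorial sum over minimal-vanishing-subsum partitions that is needed because only within-block non-vanishing is assumed. By contrast the reductions above---the passage to a unit equation, the $S_1$-unit structure of the $w_{\ell,i}$, and the injectivity coming from multiplicative independence---are routine and lose essentially nothing.
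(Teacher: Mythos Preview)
The paper does not prove this proposition at all: it is quoted verbatim as \cite[Theorem~1]{pt} (Peth\H{o}--Tichy) and used as a black box in the proof of Lemma~\ref{lem5} and Theorem~\ref{th1}. There is therefore no ``paper's own proof'' to compare your proposal against.

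That said, your sketch is a plausible outline of how the Peth\H{o}--Tichy result is obtained in the original source: normalize to an inhomogeneous $S_1$-unit equation in $M-1$ variables, use multiplicative independence of the $\alpha_\ell$ to recover the exponent tuple from the unit vector, and invoke a quantitative bound of Evertse--Schlickewei--Schmidt type, with the passage to the Galois closure accounting for the $d!$. The one place to be careful is your injectivity step: for $k=1$ the map $\boldsymbol\lambda\mapsto(w_{1,i})$ has a one-parameter kernel (simultaneous shift of all $\lambda_{1,i}$), and you correctly flag that this case needs a separate argument; be sure also that in the $k\ge 2$ case your argument actually uses that the $\alpha_\ell$ are \emph{jointly} multiplicatively independent, not just pairwise. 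But none of this bears on the present paper, which simply cites the result.
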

To proof Theorem \ref{th1}, we use the following proposition (see \cite[Theorem 2]{ev} and \cite[Proposition 6.2.1]{egybook}.
\begin{proposition}\label{lem2}
Let $T_1$ be a subset of $T$ and $t\geq 2$. Then  for every $\epsilon >0$, there exists a constant C, depending only on $K, T, t$ and $\epsilon$ such that for all $x_1, \ldots, x_t \in O_T$ with every non-empty subsum of $x_1+\cdots +x_t$ is non-zero, we have 
\[\prod_{i=1}^t\prod_{v\in T}|x_i|_v\cdot \prod_{v\in T_1}\left|\sum_{\ell =1}^{t}x_{\ell}\right|_v \geq C\left( \prod_{v\in T_1}\max\{|x_1|_v, \ldots, |x_t|_v\}\right) H_T(x_1, \ldots, x_t)^{-\epsilon}.\]
\end{proposition}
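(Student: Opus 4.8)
The plan is to prove this as a \emph{gauge inequality}, by reducing it to a lower bound for a product of linear forms and invoking Schmidt's Subspace Theorem in the number-field form of Schlickewei (as developed in \cite{egybook}). Writing $x_0:=\sum_{\ell=1}^{t}x_\ell$ and $\|\mathbf{x}\|_v:=\max_{1\le i\le t}|x_i|_v$, I first observe that both sides of the asserted inequality are invariant under replacing $(x_1,\dots,x_t)$ by $(ux_1,\dots,ux_t)$ for a $T$-unit $u$: by the product formula $\prod_{v\in T}|u|_v=1$, so $\prod_i\prod_{v\in T}|x_i|_v$ and $H_T(\mathbf{x})$ are unchanged, while $\prod_{v\in T_1}|x_0|_v$ and $\prod_{v\in T_1}\|\mathbf{x}\|_v$ both scale by the same factor $\prod_{v\in T_1}|u|_v$. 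Hence I may normalize $\mathbf{x}$ to a primitive representative, for which $H_T(\mathbf{x})$ and the projective height $H(\mathbf{x})$ agree up to a bounded factor depending only on $K$ and $T$.

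Next I would set up the system of $t$ linearly independent forms per place $v\in T$ feeding the Subspace Theorem. At places $v\in T\setminus T_1$ I take the coordinate forms $X_1,\dots,X_t$; at a place $v\in T_1$ I take $X_0:=X_1+\cdots+X_t$ together with the coordinate forms $X_i$ for $i\neq i_v$, where $i_v$ realizes $\|\mathbf{x}\|_v=|x_{i_v}|_v$. Since $X_{i_v}=X_0-\sum_{i\ne i_v}X_i$, these $t$ forms are linearly independent. A direct computation then shows that the associated double product satisfies
\[
D(\mathbf{x}):=\prod_{v\in T}\prod_{j=1}^{t}\frac{|L_{j,v}(\mathbf{x})|_v}{\|\mathbf{x}\|_v}
=\frac{1}{H_T(\mathbf{x})^{t}}\left(\prod_{i=1}^{t}\prod_{v\in T}|x_i|_v\right)\left(\prod_{v\in T_1}\frac{|x_0|_v}{\|\mathbf{x}\|_v}\right),
\]
so that, after dividing the claimed inequality through by $\prod_{v\in T_1}\|\mathbf{x}\|_v$, what has to be proved is exactly the lower bound $D(\mathbf{x})\ge C\,H_T(\mathbf{x})^{-t-\epsilon}$.

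With this reformulation the Subspace Theorem does most of the work: applied to the above system with $\delta=\epsilon$ (in the normalisation of \cite{egybook}, where the trivial exponent is $-t$), it shows that every $\mathbf{x}$ violating $D(\mathbf{x})\ge H(\mathbf{x})^{-t-\epsilon}$ lies in a finite union of proper linear subspaces $V_1,\dots,V_m$ of $K^t$. For $\mathbf{x}$ in none of the $V_l$ the desired bound follows at once from the contrapositive (after absorbing the bounded $H/H_T$ discrepancy into $C$), and the tuples of bounded height — of which there are only finitely many modulo $T$-units by Northcott's theorem — contribute a positive lower bound, because the hypothesis that every subsum is nonzero forces $D(\mathbf{x})\neq 0$.

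The main obstacle, and the technical heart of the argument, is to handle the tuples of large height that do lie in one of the exceptional subspaces, since these cannot simply be discarded. Here I would argue by induction on $t$, the base case $t=2$ reducing to the $\mathbb{P}^1$ situation handled directly by the Subspace Theorem. On $V_l$ a fixed nontrivial relation $\sum_i c_i x_i=0$ holds; using it to eliminate one variable $x_{i_0}$ rewrites $x_0$ as a sum of at most $t-1$ terms $y_i=(1-c_i/c_{i_0})x_i$, and dropping the discarded factor $\prod_{v\in T}|x_{i_0}|_v\ge 1$ only weakens the left-hand side in the favorable direction, so the inductive estimate applies up to bounded constants. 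The delicate point is that the non-vanishing-subsum hypothesis need not be inherited verbatim by the reduced configuration $\{y_i\}$; one must therefore partition these terms into maximal blocks along their vanishing partial sums and apply the inductive bound to the resulting genuinely non-degenerate shorter sum. Keeping track of the finitely many such blocks, the finitely many subspaces, and the heights of the fixed coefficients $c_i$ is what produces the final constant $C$ depending only on $K$, $T$, $t$ and $\epsilon$.
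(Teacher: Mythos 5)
The paper does not prove this proposition at all: it is quoted verbatim from Evertse \cite{ev} and the Evertse--Gy\H{o}ry book \cite{egybook}, so your attempt has to be measured against the argument in those sources rather than against anything in the present text. Your route is in fact the same as theirs: reduce to a lower bound for a normalized double product of linear forms and apply the $p$-adic Subspace Theorem, then treat the exceptional subspaces by induction on $t$. The parts you work out are correct. The $T$-unit gauge invariance of both sides is right (both the product $\prod_i\prod_{v\in T}|x_i|_v$ and $H_T$ are invariant, and the two remaining factors scale identically), the choice of forms (coordinates at $v\in T\setminus T_1$, and $X_0$ replacing $X_{i_v}$ at $v\in T_1$) is the standard one, and your identity for $D(\mathbf{x})$ checks out: the places of $T_1$ contribute $|x_0|_v\prod_i|x_i|_v/\|\mathbf{x}\|_v$, so dividing the asserted inequality by $\prod_{v\in T_1}\|\mathbf{x}\|_v$ does turn it into $D(\mathbf{x})\ge C\,H_T(\mathbf{x})^{-t-\epsilon}$, which is exactly the shape the Subspace Theorem addresses.

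The genuine content, however, sits in the induction over the exceptional subspaces, and there your sketch has concrete gaps. First, the reduced terms $y_i=(1-c_i/c_{i_0})x_i$ need not lie in $O_T$, and the coefficients need not be $T$-units; one must enlarge $T$ to a finite set $T'$ covering the (finitely many, since there are finitely many subspaces) places where these coefficients misbehave, and check that the constant still depends only on $K,T,t,\epsilon$. Second, and more seriously, indices $i$ with $c_i=c_{i_0}$ give $y_i=0$ while $x_i$ remains a free variable that can realize $\max_j|x_j|_v$; your inductive conclusion only controls $\prod_{v\in T_1}\max_i|y_i|_v$, and it is not automatic that this dominates the required factor $\prod_{v\in T_1}\max_i|x_i|_v$ up to constants. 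Recovering that factor is precisely the bookkeeping that Evertse's proof performs and that your "partition into maximal vanishing blocks" paragraph gestures at without carrying out: discarding a zero-sum block from $x_0$ is favorable for the $\prod_{v\in T}|y_i|_v\ge 1$ factors, but not obviously for the $\max$ factor on the right. Until that step is made precise the argument is an accurate outline of the cited proof rather than a proof.
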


\begin{lemma}\label{lem3}
Suppose $\alpha$ is an algebraic number of degree $t$  and $\alpha = \alpha_1, \cdots, \alpha_t $ denotes its conjugates. Assume  $|\alpha_1|>1> |\alpha_2|\geq \cdots \geq |\alpha_t|$. If $t> 2$ then the conjugates of $\alpha$ are pairwise multiplicatively independent.  If $t=2$ and $\alpha_1$ and $\alpha_2$ are multiplicatively dependent then $|\alpha_1\alpha_2| = 1$.
\end{lemma}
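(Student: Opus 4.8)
The plan is to argue by contradiction from a putative multiplicative relation, exploiting that by hypothesis $\alpha_1$ is the \emph{unique} conjugate of modulus exceeding $1$. Suppose two distinct conjugates $\alpha_i,\alpha_j$ (with $i\neq j$) are multiplicatively dependent, say $\alpha_i^{a}\alpha_j^{b}=1$ with integers $a,b$ not both zero. First I would observe that both $a$ and $b$ are nonzero: if, say, $b=0$, then $\alpha_i^{a}=1$ with $a\neq 0$ forces $\alpha_i$ to be a root of unity, whence $|\alpha_i|=1$, contradicting the hypothesis that every conjugate has modulus $\neq 1$. Next I pass to the Galois closure $L$ of $\mathbb{Q}(\alpha)$, set $G=\mathrm{Gal}(L/\mathbb{Q})$, and fix an embedding $L\hookrightarrow\mathbb{C}$; then $G$ acts transitively on $\{\alpha_1,\dots,\alpha_t\}$, and for each conjugate the fibre $\{\sigma\in G:\sigma(\alpha_i)=\alpha_k\}$ is a coset of a stabilizer, hence has size $|G|/t$.

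For the case $t>2$, applying every $\sigma\in G$ to the relation gives $\sigma(\alpha_i)^{a}\sigma(\alpha_j)^{b}=1$, that is
\[
a\log|\sigma(\alpha_i)|+b\log|\sigma(\alpha_j)|=0\qquad(\sigma\in G).
\]
Choosing $\sigma_0$ with $\sigma_0(\alpha_i)=\alpha_1$ and writing $\sigma_0(\alpha_j)=\alpha_q$ (so $q\neq 1$ and $\log|\alpha_q|<0$), the relation $a\log|\alpha_1|=-b\log|\alpha_q|$ together with $\log|\alpha_1|>0$ shows that $a$ and $b$ have the same sign; replacing $(a,b)$ by $(-a,-b)$ if necessary, I may assume $a,b>0$. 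Then for \emph{every} $\sigma$ the displayed identity forces $\log|\sigma(\alpha_i)|$ and $\log|\sigma(\alpha_j)|$ to have opposite signs (they cannot both be positive or both negative, and none is zero since no conjugate has modulus $1$), so exactly one of $\sigma(\alpha_i),\sigma(\alpha_j)$ has modulus $>1$ and therefore equals $\alpha_1$. Consequently $G$ is the disjoint union of $\{\sigma:\sigma(\alpha_i)=\alpha_1\}$ and $\{\sigma:\sigma(\alpha_j)=\alpha_1\}$, each of size $|G|/t$; hence $2|G|/t=|G|$, i.e. $t=2$, a contradiction. This establishes pairwise multiplicative independence when $t>2$.

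For the case $t=2$ the splitting field is $\mathbb{Q}(\alpha_1)$, whose nontrivial automorphism $\tau$ swaps $\alpha_1$ and $\alpha_2$. Applying $\tau$ to $\alpha_1^{a}\alpha_2^{b}=1$ yields $\alpha_2^{a}\alpha_1^{b}=1$, and dividing the two relations gives $(\alpha_1/\alpha_2)^{a-b}=1$. If $a\neq b$, this makes $\alpha_1/\alpha_2$ a root of unity, so $|\alpha_1|=|\alpha_2|$, contradicting $|\alpha_1|>1>|\alpha_2|$; therefore $a=b$, and then $(\alpha_1\alpha_2)^{a}=1$ with $a\neq 0$ gives $|\alpha_1\alpha_2|=1$, as required.

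The main obstacle I anticipate lies in the bookkeeping of the $t>2$ case: one must justify that the two fibres are disjoint (they cannot coincide, since $\alpha_i\neq\alpha_j$ rules out $\sigma(\alpha_i)=\sigma(\alpha_j)=\alpha_1$) and that together they exhaust $G$ (which is precisely the statement that some coordinate is always $\alpha_1$, coming from the opposite-sign analysis). The uniqueness of the modulus-$>1$ conjugate is what makes the count collapse to $t=2$, while the preliminary sign determination of $(a,b)$ is the delicate step that enables it.
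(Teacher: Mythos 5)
Your proof is correct and complete: the sign analysis forcing exactly one of $\sigma(\alpha_i),\sigma(\alpha_j)$ to be the unique conjugate of modulus $>1$, combined with the coset count $|G|/t$ for each fibre, does collapse the $t>2$ case to $t=2$, and the $t=2$ case via the swap automorphism is handled correctly. The paper itself gives no argument here --- it simply refers to Lemma 2 of Peth\H{o} and Tichy --- and your Galois-conjugation-plus-counting argument is essentially the standard route taken in that reference, so nothing further is needed.
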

\begin{proof}
See proof of Lemma 2 in \cite{pt}.
\end{proof}
In the proof of Theorem \ref{th1}, we  also need the following result which gives information on the characteristic roots and its coefficients appearing in the explicit forms of recurrence sequences.

\begin{lemma} \cite{bhpr}\label{lem4}
Let $(U_n)_{n\geq 0}$ be a non-degenerate recurrence sequence of order $r \geq 2$ and set 
\[\gamma:=\max\left\{\max_{1 \leq i \leq r}|a_i|, \max_{0 \leq j \leq {r-1}}|U_j| \right\}.\]

\begin{enumerate}
\item[(i)] With the notation in \eqref{eq6}, write $$f_i(n)=\beta_{i,0}+\beta_{i,1}n+ \dots + \beta_{i,m_i-1}n^{m_i-1}\ (i=1,\dots,t).$$ Then we have
$$
\max\limits_{1\leq i \leq t,\ 0\leq \ell \leq m_i-1}\left\{h(\alpha_i),h(\beta_{i,\ell})\right\}\leq c_3.
$$
Here $c_3$ is an effectively computable constant depending only on $\gamma$ and $r$.

\vskip.2cm

\item[(ii)] If $n\geq 1$ and $f_i(n)\neq 0$ then
$$
c_4\leq |f_i(n)| \leq c_5n^{m_i-1}\ \ \ (1 \leq i \leq t),
$$
where $c_4$ and $c_5$ are effectively computable constants depending only on $\gamma$ and $r$.

\vskip.2cm

\item[(iii)] Suppose that $\alpha_1$ is dominant root of the sequence $(U_n)_{n\geq 0}$. Then we have
    $$
    |U_n|\leq c_6n^{r-1}|\alpha_1|^n\ \ \ (n\geq 1),
    $$
where $c_6$ is an effectively computable constant depending only on $\gamma$ and $r$.
\end{enumerate}
\end{lemma}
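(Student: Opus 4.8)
The plan is to establish the three assertions in turn, working from the explicit representation \eqref{eq6} together with the standard functorial properties of the absolute logarithmic height: $h(x+y)\le h(x)+h(y)+\log 2$, $h(xy)\le h(x)+h(y)$, $h(x^{-1})=h(x)$, and $h(n)=\log n$ for a positive integer $n$. A useful preliminary observation is that, since $a_1,\dots,a_r,U_0,\dots,U_{r-1}$ are integers of absolute value at most $\gamma$, there are only finitely many admissible sequences for a given $\gamma$ and $r$; hence any quantity attached to a single sequence and shown to be positive (or bounded) may be absorbed into a constant depending only on $\gamma$ and $r$, and one may then extract an explicit value by the estimates below to keep everything effective.

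For part (i), each $\alpha_i$ is a root of the monic characteristic polynomial $f$ of \eqref{eq5}, whose integer coefficients are bounded in absolute value by $\gamma$; thus $h(\alpha_i)$ is controlled by the Mahler measure of $f$, giving $h(\alpha_i)\le c$. To bound the $\beta_{i,\ell}$, I would read the identities $U_n=\sum_i f_i(n)\alpha_i^{n}$ for $n=0,1,\dots,r-1$ as a linear system in the unknowns $\beta_{i,\ell}$ whose coefficient matrix is a confluent Vandermonde matrix in the $\alpha_i$. Its determinant is a nonzero product of powers of the differences $\alpha_i-\alpha_j$ (nonzero because the $\alpha_i$ are distinct), so Cramer's rule writes each $\beta_{i,\ell}$ as a quotient of determinants that are finite polynomial expressions in the $\alpha_i$ and the initial values $U_0,\dots,U_{r-1}$. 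Feeding these into the height inequalities yields $h(\beta_{i,\ell})\le c_3$.

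For the upper bound in part (ii), the bound on $h(\beta_{i,\ell})$ controls the house of $\beta_{i,\ell}$, so $|\beta_{i,\ell}|\le c$ for all $i,\ell$; the triangle inequality applied to $f_i(n)=\sum_{\ell=0}^{m_i-1}\beta_{i,\ell}n^{\ell}$ then gives $|f_i(n)|\le c_5 n^{m_i-1}$ for $n\ge 1$. For part (iii), under the dominance hypothesis I would combine this with $|\alpha_i|\le|\alpha_1|$ and $m_i-1\le r-1$, summing the $t$ terms of \eqref{eq6} to obtain $|U_n|\le c_6 n^{r-1}|\alpha_1|^{n}$.

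The delicate point, on which I would spend the most care, is the lower bound $|f_i(n)|\ge c_4$ in part (ii) with a constant independent of $n$. A naive Liouville estimate fails, since $h(f_i(n))$ grows like $\log n$ and would only produce a bound decaying in $n$. Instead I would factor $f_i(x)=\beta_{i,m_i-1}\prod_j(x-\rho_{i,j})$ over its finitely many fixed roots and bound each factor $|n-\rho_{i,j}|$ from below: when $\rho_{i,j}$ is real and non-integral the factor is at least the distance from $\rho_{i,j}$ to the nearest integer, when $\rho_{i,j}$ is non-real it is at least $|\mathrm{Im}\,\rho_{i,j}|$, and when $\rho_{i,j}$ is a rational integer the hypothesis $f_i(n)\ne 0$ forces $|n-\rho_{i,j}|\ge 1$. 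Each of these positive quantities is bounded below effectively in terms of $h(\rho_{i,j})\le c_3$ and $r$ by applying the fundamental (Liouville) inequality to the nonzero algebraic number $\rho_{i,j}-a$, where $a$ is the nearest integer; combining these with the lower bound $|\beta_{i,m_i-1}|\ge c$ produces the desired constant $c_4>0$.
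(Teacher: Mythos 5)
The paper does not prove this lemma at all: it is quoted verbatim from the reference [bhpr] (B\'erczes--Hajdu--Pink--Rout), so there is no in-paper argument to compare against. Your proof is correct and is essentially the standard one. Parts (i), the upper bound in (ii), and (iii) are routine once you observe that the $\beta_{i,\ell}$ solve a nonsingular confluent Vandermonde system (nonsingular because the $\alpha_i$ are distinct and, since $a_r\neq 0$, nonzero), so Cramer's rule plus the elementary height inequalities gives (i), and the bound $|\beta_{i,\ell}|\leq e^{[K:\mathbb{Q}]\,h(\beta_{i,\ell})}$ gives the rest. You correctly identify the only delicate point, the $n$-independent lower bound $|f_i(n)|\geq c_4$, and your treatment of it is sound: factor $f_i$ over its at most $r-1$ roots $\rho_{i,j}$, bound the leading coefficient from below by the Liouville inequality $|\beta|\geq e^{-\deg(\beta)h(\beta)}$, and bound each $|n-\rho_{i,j}|$ from below either by $1$ (when $\rho_{i,j}\in\mathbb{Z}$, using $f_i(n)\neq 0$) or by the Liouville inequality applied to the nonzero number $\rho_{i,j}-a$ with $a$ the nearest integer (this single case in fact subsumes your real/non-real split, since $\min_{n\in\mathbb{Z}}|n-\rho|=|a-\rho|$). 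One cosmetic imprecision: the roots $\rho_{i,j}$ of $f_i$ are not among the $\beta_{i,\ell}$, so their heights are not literally bounded by $c_3$; rather they are bounded by an effective constant derived from $c_3$ via the standard estimate for heights of roots of a polynomial in terms of the heights of its coefficients. With that phrasing adjusted, the argument is complete and effective, matching in substance the proof in the cited source.
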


\section{Proof of Theorem \ref{th1}}
To proof Theorem \ref{th1}, we need the following lemma.
\begin{lemma}\label{lem5}
Let $(U_n)_{n\geq 0}$ be a non-degenerate recurrence sequence of integers defined in \eqref{eq4} and assume that $|\alpha_1|>1> |\alpha_j|$ for $j=2, \ldots, t$. Then, there are finitely many tuples $(n_1, \ldots, n_k)$ such that
 either \[U_{n_k}+ \cdots+U_{n_1} =0\] 
 or \[\sum_{h \in J_k} U_{n_h} = 0\; \quad \mbox{for}\quad J_k\subseteq \{1, \ldots, k\}.\]

\end{lemma}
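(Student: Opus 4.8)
The plan is to decompose the set of ``bad'' tuples and to bound each piece by an $S$-unit type count. The tuples in question form the union, over the finitely many subsets $J\subseteq\{1,\dots,k\}$ (the full sum being the case $J=\{1,\dots,k\}$), of the solution sets of $\sum_{h\in J}U_{n_h}=0$; since a finite union of finite sets is finite, it suffices to fix one nonempty $J$ with $|J|=m$ and to show that $\sum_{h\in J}U_{n_h}=0$ has only finitely many solutions in $m$-tuples of distinct nonnegative integers $(n_h)_{h\in J}$. First I would substitute the explicit representation \eqref{eq6} to obtain
\[
\sum_{h\in J}U_{n_h}=\sum_{i=1}^{t}\sum_{h\in J}f_i(n_h)\,\alpha_i^{\,n_h}=0,
\]
a vanishing generalized power sum in the unknown exponents $n_h$, whose coefficients are the polynomial values $f_i(n_h)$.

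Next I would bring this equation into the shape demanded by Proposition \ref{lem1}. Grouping the terms according to the base $\alpha_i$, for each fixed $i$ one obtains the block $\sum_{h\in J}f_i(n_h)\alpha_i^{\,n_h}$, i.e.\ $m$ terms sharing the base $\alpha_i$ but carrying distinct exponents $n_h$. Proposition \ref{lem1} requires the bases to be multiplicatively independent, and this is exactly where Lemma \ref{lem3} enters: for $t>2$ the roots are pairwise multiplicatively independent, while for $t=2$ multiplicative dependence forces $|\alpha_1\alpha_2|=1$, a degenerate case I would treat separately by writing both roots as powers of a common generator. In general the conjugates need not be \emph{jointly} independent (for example when $\alpha_1$ is a Pisot unit, so that $\prod_i\alpha_i=\pm1$), so I would first replace $\{\alpha_1,\dots,\alpha_t\}$ by a maximal multiplicatively independent subset $\{\gamma_1,\dots,\gamma_g\}$, write each $\alpha_i=\zeta_i\prod_j\gamma_j^{\,e_{ij}}$ with $\zeta_i$ a root of unity, and use non-degeneracy to split the range of the $n_h$ into finitely many residue classes on which the factors $\zeta_i^{\,n_h}$ are constant; on each class the equation becomes a genuine power sum in the independent bases $\gamma_j$. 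The per-base non-vanishing conditions required by Proposition \ref{lem1} are straightforward to verify here, since every base has absolute value different from $1$, so any such subsum is dominated by a single extreme term and cannot vanish.

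With the equation in the correct form and the subsum conditions checked, Proposition \ref{lem1} bounds the number of admissible exponent tuples, so $\sum_{h\in J}U_{n_h}=0$ has finitely many solutions $(n_h)_{h\in J}$. Summing the bound over the finitely many $J$ then yields finitely many bad tuples, which is the assertion of the lemma. In the non-simple case the polynomial factors $f_i(n_h)$, of degree $m_i-1$ by \eqref{eq6}, are absorbed into the same polynomial-exponential framework, their heights and sizes being controlled by Lemma \ref{lem4}.

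The main obstacle I anticipate is precisely this multiplicative structure of the roots: Proposition \ref{lem1} needs multiplicatively independent bases, yet the conjugate roots satisfying $|\alpha_1|>1>|\alpha_j|$ can be jointly multiplicatively dependent. Carrying out the reduction to a maximal independent generating set, while keeping track of the root-of-unity factors (handled through non-degeneracy and the splitting into residue classes) and of the polynomial coefficients, is the delicate part; once this normalization is in place, the finiteness follows at once from Proposition \ref{lem1}.
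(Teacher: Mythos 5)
Your overall skeleton matches the paper's: substitute the representation \eqref{eq6}, group the terms into blocks sharing a base $\alpha_i$, invoke Lemma \ref{lem3} for multiplicative independence, and count solutions with Proposition \ref{lem1}. But there is a genuine gap at the decisive step, namely your claim that the per-base non-vanishing conditions of Proposition \ref{lem1} are automatic because ``every base has absolute value different from $1$, so any such subsum is dominated by a single extreme term and cannot vanish.'' This is false: within the block $\sum_{h}f_i(n_h)\alpha_i^{n_h}$ the ratio of consecutive terms is roughly $|\alpha_i|^{n_{h+1}-n_h}$, which can be arbitrarily close to $1$ when the exponent gaps are small and $|\alpha_i|$ is close to $1$ (and even for the dominant block, $|\alpha_1|$ need not exceed the number of summands), so no single term need dominate the rest. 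Proposition \ref{lem1} says nothing about solutions for which such a sub-block vanishes, so your count simply does not cover them. This is precisely the case the paper must confront, and it handles it differently: if $\sum_{h\in J_k}f_j(n_h)\alpha_j^{n_h}=0$ for one index $j$, applying a Galois automorphism sending $\alpha_j$ to $\alpha_i$ shows the same relation for every $i$, and summing over $i$ yields $\sum_{h\in J_k}U_{n_h}=0$. That is why the lemma is formulated as a dichotomy (finitely many tuples, \emph{or} a vanishing subsum of the $U_{n_h}$), with the residual subsum case disposed of later in the proof of Theorem \ref{th1} by the partition argument around \eqref{eq18}. Your route could be repaired by keeping the Galois step and running an induction on $|J|$ (the base case $|J|=1$ being Skolem--Mahler--Lech), but as written the finiteness claim for a fixed $J$ is unsupported.

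Two smaller points. First, you never verify the hypothesis of Proposition \ref{lem1} that the bases satisfy $|\alpha_i|_v=1$ for all places $v$ outside a fixed finite set $S_1$; the paper does this by taking $S_1$ to be the places dividing $a_r$ and using that each $\alpha_\ell$ divides $a_r$ in $\mathcal{O}_K$. Without some such choice of $S_1$ the proposition cannot be applied at all. Second, your worry about joint versus pairwise multiplicative independence is legitimate (Lemma \ref{lem3} only gives pairwise independence, and the paper is cavalier in promoting this to the hypothesis of Proposition \ref{lem1}); your reduction to a maximal multiplicatively independent set together with a splitting into residue classes is a reasonable device, though you would then also need to recheck the $S_1$-unit condition and the non-vanishing conditions for the new bases, which brings you back to the gap above.
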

\begin{proof}
Suppose that $U_{n_k}+ \cdots+U_{n_1} =0$.  That is 
 \begin{equation}\label{eq14}
 \sum_{i=1}^{k} \sum_{j=1}^tf_j(n_i)\alpha_{j}^{n_i} = \sum_{j=1}^t \sum_{i=1}^{k} f_j(n_i)\alpha_{j}^{n_i} =0.
 \end{equation}
 Since $|\alpha_1|>1> |\alpha_j|$ for $j=2, \ldots, t$, by Lemma \ref{lem3} the algebraic numbers $\alpha_1, \cdots, \alpha_t$ are multiplicatively independent. To apply Proposition \ref{lem1}, we need to show that $\alpha_{\ell}$ are $S_1$-units for $\ell =1, \ldots, t$.
 

 From \eqref{eq5}, the constant term of characteristic polynomial is $a_r$. Let $S_1$ be the minimal set of places of $K$ such that $\prod_{v\in S_1} |a_r|_v =1$. Then $s_1:=|S_1|\leq d(\omega(a_r)+1)$, where $\omega(a_r)$ is the number of distinct prime divisors of $a_r$ and $d$ is the degree of the number field. Also, $\alpha_{\ell}$ divides $a_r$ in $\mathcal{O}_K$ for all $1\leq \ell \leq t$. Thus, $\prod_{v\in S_1} |\alpha_{\ell}|_v =1$, i.e., $\alpha_{\ell}$ are $S_1$-units.

 By Proposition \ref{lem1} equation \eqref{eq14} has at most $(4s_1d!)^{2^{36ktd!}}s_1^6$ solutions in
  \[(n_1, \ldots, n_k, \ldots, n_1, \ldots, n_k)\in \mathbb{Z}^{kt}\] such that 
 \begin{equation}\label{eq15}
 \sum_{h \in J_k}f_j(n_h)\alpha_{j}^{n_h} \neq 0
 \end{equation}
 holds for all $J_k\subseteq \{1, \ldots, k\}$.
 
 Assume that for $J_k\subseteq \{1, \ldots, k\}$, equation \eqref{eq15} does not hold, i.e.,
   \begin{equation}\label{eq16}
 \sum_{h \in J_k}f_j(n_h)\alpha_{j}^{n_h} = 0.
 \end{equation}
Let $\sigma_j$ be an automorphism of $K$ which sends $\alpha_j$ to $\alpha_i$. Applying $\sigma_j$ to \eqref{eq16}, we get 
\[\sum_{h \in J_k}f_i(n_h)\alpha_{i}^{n_h}  = 0.\]
Therefore, \eqref{eq16} holds for all $j =1,\ldots, t$. Thus, we get 
 \begin{equation}\label{eq17}
\sum_{h \in J_k} U_{n_h} = \sum_{h \in J_k}\left(f_1(n_h)\alpha_{1}^{n_h} + \cdots + f_t(n_h)\alpha_{t}^{n_h}\right)=   0.
 \end{equation}
From the above arguments, we conclude that either there are finitely many $(n_1, \ldots, n_k)$ such that $U_{n_k}+ \cdots+U_{n_1} =0$ or $\sum_{h \in J_k} U_{n_h} = 0$ for $J_k\subseteq \{1, \ldots, k\}$.
\end{proof}

\subsection{Proof of Theorem \ref{th1}}
Let $S_1$ be the set of places of $K$ as in the proof of Lemma \ref{lem5} and $S_2$ be any subset of $S_1$.    Choose a real number $\epsilon' >0$ which will be later taken sufficiently small in terms of $\epsilon$. In the proof $c_7, \ldots, c_{16}$ denote positive effective constants depending on $\epsilon, K, S_1, (U_n)_{n\geq 0}, S$. Further, we introduce the following notation.
Put 
\[A:= \max\{|\alpha_1|, \ldots, |\alpha_t|\}, \quad A_p:= \max\{|\alpha_1|_p, \ldots, |\alpha_t|_p\}\]
for $p\in S$ and 
\[A_v:= \max\{|\alpha_1|_v, \ldots, |\alpha_t|_v\}\]
for $v\in V_K$. Then by our choice of the absolute values on $K$, we have 
\[\prod_{v\mid \infty} A_v = A, \quad \prod_{v\mid p} A_v = A_p,\quad \mbox{for}\;\;p \in S.\]

 Our first claim is each subsum of 
\begin{equation}\label{eq13}
U_{n_k}+ \cdots+U_{n_1} = \sum_{j=1}^tf_j(n_k)\alpha_{j}^{n_k}+ \cdots+ \sum_{j=1}^tf_j(n_1)\alpha_{j}^{n_1}
\end{equation}
is non-zero except for finitely many choices of $(n_1, \ldots, n_k)$. Suppose that each $U_{n_{\ell}} =0$ for $\ell = 1,\ldots, k$. Since $(U_n)_{n\geq 0}$ is a non-degenerate linear recurrence sequence, by Skolem-Mahler-Lech Theorem, there are only finitely many non-negative integers $n_{\ell}$ for which at least one of the subsums of $U_{n_{\ell}}$ vanishes. 

For a given partition $P$ of the set $\{1, \ldots, k\}$, suppose we have the following system of equations
\begin{equation}\label{eq18}
\sum\limits_{i\in\lambda} U_{n_i}=0\ \ \ (\lambda\in P), 
\end{equation}
with cardinality of $\lambda \geq 2$. Consider those solutions of \eqref{eq18} which do not satisfy any further refinement of the partition $P$. Then by Lemma \ref{lem5} we conclude that each equation in \eqref{eq18} has finitely many solutions. Since number of partitions $P$ of $\{1, \ldots, k\}$ is bounded in terms of $k$, the claim is valid. For the remaining positive integer tuples $(n_1, \ldots, n_k)$ by applying  Proposition \ref{lem2},  we have,
\begin{equation}\label{eq19}
\prod_{i=1}^k\prod_{v\in S_1}|U_{n_i}|_v\cdot \prod_{v\in S_2}| U_{n_k}+ \cdots + U_{n_1}|_v \geq c_7 \left( \prod_{v\in S_2}\max_{\substack{1\leq i\leq k\\ 1\leq j\leq t}}|f_j(n_i)\alpha_j^{n_i}|_v\right) \left(\prod_{v \in S_1} \max_{\substack{1\leq i\leq k\\ 1\leq j\leq t}}|f_j(n_i)\alpha_j^{n_i}|_v\right)^{-\epsilon'/2}.
\end{equation}
Since  $\alpha_1$ is dominant root of the sequence $(U_n)$, by Lemma \ref{lem4}(iii) the left hand side of the inequality \eqref{eq19} is
\begin{align*}
&\leq c_8 \prod_{i=1}^k\prod_{v\in S_1}\left|n_i^{r-1}\right|_v \left|\alpha_1\right|^{n_i}_v\cdot \prod_{v\in S_2}| U_{n_k}+ \cdots + U_{n_1}|_v\\
& \leq c_9 n_k^{k(r-1)} \cdot \prod_{v\in S_2}| U_{n_k}+ \cdots + U_{n_1}|_v,
\end{align*}
where we have used $\prod_{v\in S_1} |\alpha_i|_v =1$ for $i =1, \ldots, k$ and  $n_k>\cdots > n_1$.
By Lemma \ref{lem4}(ii) the right hand side of the inequality \eqref{eq19} is
\begin{align*}
& \geq c_{10}\left(\prod_{v\in S_2}\max_{\substack{1\leq i\leq k\\ 1\leq j\leq t}}|\alpha_j^{n_i}|_v\right)\left(\prod_{v\in S_1}\max_{\substack{1\leq i\leq k\\ 1\leq j\leq t}}|\alpha_j^{n_i}|_v\right)^{-\epsilon'/2}\\
&\geq c_{11}\left(\prod_{v\in S_2} A_v\right)^{n_k}\left(\prod_{v\in S_1} A_v\right)^{-n_k\epsilon'/2}\geq c_{12}\left(\prod_{v\in S_2} A_v\right)^{n_k}\cdot A^{-n_k\epsilon'/2}.
\end{align*}
Hence, we conclude
\begin{equation}\label{eq20}
 \prod_{v\in S_2}| U_{n_k}+ \cdots + U_{n_1}|_v \geq c_{13}n_k^{-k(r-1)}\cdot  \left(\prod_{v\in S_2} A_v\right)^{n_k}\cdot A^{-n_k\epsilon'/2}.
\end{equation}
Further, we estimate an  trivial upper bound 
\begin{equation}\label{eq21}
 \prod_{v\in S_2}| U_{n_k}+ \cdots + U_{n_1}|_v\leq \prod_{v\in S_2}\max_{1\leq i\leq k}| U_{n_i}|_v  \leq c_{14}n_k^{k(r-1)}\cdot  \left(\prod_{v\in S_2} A_v\right)^{n_k}.
\end{equation}
Thus from \eqref{eq20} and \eqref{eq21}
\[c_{15} \left(\prod_{v\in S_2} A_v\right)^{n_k}\cdot A^{-n_k\epsilon'/2}\leq  \prod_{v\in S_2}| U_{n_k}+ \cdots + U_{n_1}|_v \leq c_{16}n_k^{k(r-1)} \cdot \left(\prod_{v\in S_2} A_v\right)^{n_k}.\]
Since $A>1$, for sufficiently large $n_k$ we have
\begin{equation}\label{eq22}
\left(\prod_{v\in S_2} A_v\right)^{n_k}\cdot A^{-n_k\epsilon'}\leq  \prod_{v\in S_2}| U_{n_k}+ \cdots + U_{n_1}|_v \leq \left(\prod_{v\in S_2} A_v\right)^{n_k}\cdot A^{n_k\epsilon'}
\end{equation}
 Now we consider two cases. In the first case,
suppose that $S_2$ has only infinite places of $K$, then for sufficiently large $n_k$, we have  
\begin{equation}\label{eq22a}
A^{n_k(1-\epsilon')}\leq  | U_{n_k}+ \cdots + U_{n_1}| \leq  A^{n_k(1+\epsilon')}.
\end{equation}
 In the other case, if $S_2$ consists the places of $K$ lying above the primes in $S$, then 
\[\prod_{v\in S_2}| U_{n_k}+ \cdots + U_{n_1}|_v = \prod_{p\in S}[ U_{n_k}+ \cdots + U_{n_1}]_S^{-1}, \quad \mbox{and}\;\;\prod_{v\in S_2} A_v = \prod_{p\in S} A_p = A^{-\delta}\]
where $\delta:= -\frac{\sum_{p\in S} \log \max \{|\alpha_1|_p,\ldots, |\alpha_t|_p\}}{\log \max \{|\alpha_1|,\ldots, |\alpha_t|\}}$. Hence,
\[A^{n_k(\delta-\epsilon')}\leq  [U_{n_k}+ \cdots + U_{n_1}]_S \leq  A^{n_k(\delta+\epsilon')},\]
and by taking $\epsilon'$ sufficiently small interms of $\epsilon$, we have 
\begin{equation}\label{eq23}
 [U_j^{(k)}]_S \leq  (U_j^{(k)})^{(\delta+\epsilon)}.
\end{equation}
Suppose that $\delta > 0$. Then there is a prime number $p\in S$ such that $\max_i |\alpha_i|_p <1$. Again since $a_1, \ldots, a_k$ are  the elementary symmetric functions in $\alpha_i$ up to sign, we have $|a_i|_p<1$ for $i =1, \ldots, k$. This is not true as $\gcd(p_1\cdots p_s, a_1, \cdots, a_r)=1$.  Thus, $\delta = 0$. This completes the proof of Theorem \ref{th1}. \qed

\section{Proof of Theorem \ref{th2}}

To prove Theorem \ref{th2}, we need the following results.  First one is a Baker-type result of Matveev \cite[Theorem 2.2]{Matveev2000}. Further, in the proof of Lemma \ref{lem14} and Theorem \ref{th2},  $c_{20}, c_{21}, \ldots, c_{41}$ denote effectively computable positive constants and depending on $\epsilon, (U_n)_{n\geq 0}, S, K$ and the constants $C_1, C_2, \ldots, C_8$ are absolute, positive and effectively computable, .

\begin{proposition}[\cite{Matveev2000}]\label{lem12}
 For any integer $m \geq 2,$ let $\ga_1,\ldots,\ga_m$ be non-zero algebraic numbers and let $b_{1},\ldots, b_{m}$ be rational integers. Let $D$ be the degree of the number field $\mathbb{Q}(\ga_1,\ldots,\ga_m)$ over $\mathbb{Q}$.  Let $A_1, \ldots, A_m$ be real numbers with 
\begin{equation}\label{eq24}
\log A_j \geq \max \left\{ h(\ga_j) , \frac{|\log \ga_j|}{D}, \frac{0.16}{D}  \right\}, \quad (j= 1, \ldots,m)
\end{equation}
where $h(\gamma)$ denotes the absolute logarithmic height of $\gamma$ and set  
\[B = \max \left\{1, \left\{|b_j|\frac{\log A_j}{\log A_m}: 1\leq j \leq m \right\}\right\}.\]
Consider the linear form 
 \[\Lambda:=\ga_{1}^{b_1}\cdots\ga_{m}^{b_m} - 1\]
and assume that $\Lambda \neq 0$. Then, 
\[\log |\Lambda| \geq  -4\times 30^{m+4}\times {(m+1)}^{5.5}\times D^{m+2}\log(eD) \log(eB) \log  A_{1}\cdots \log A_{m}.\]
\end{proposition}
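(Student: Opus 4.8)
The statement is Matveev's explicit lower bound for linear forms in logarithms, so the plan is to carry out Baker's method in the sharp, fully explicit form that Matveev developed; I do not expect a short argument, and the real content lies in tracking constants. The first reduction is to pass from the multiplicative form $\Lambda = \ga_1^{b_1}\cdots\ga_m^{b_m}-1$ to the additive linear form $L = b_1\log\ga_1+\cdots+b_m\log\ga_m$, for suitable determinations of the logarithms. When $|\Lambda|$ is small, $L$ lies near $2\pi i k$ for some integer $k$, so after adjoining an auxiliary term (absorbing the $2\pi i$ via $\log(-1)$) it suffices to bound $|L|$ from below. I would then set the problem on the commutative algebraic group $\mathbb{G}_a\times\mathbb{G}_m^m$, where the relevant one-parameter subgroup is generated by the point $(1;\log\ga_1,\dots,\log\ga_m)$ and the weights $b_1,\dots,b_m$ are built into its parametrization.

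The heart of the argument is the auxiliary-function construction. Using Siegel's lemma in the sharp Bombieri--Vaaler form (which is what yields the clean numerical constant), I would produce a nonzero polynomial $P$ in $m+1$ variables with rational-integer coefficients, of carefully chosen partial degrees $L_0,L_1,\dots,L_m$ and controlled height, so that the entire function $\Phi(z)=P\bigl(z,\ga_1^{z},\dots,\ga_m^{z}\bigr)$, read along the subgroup just described, vanishes to high order $T$ at the integer points $z=0,1,\dots$. The degrees $L_j$ are calibrated against $\log A_j$, while the number of interpolation points and the order $T$ are calibrated against $D$ and $\log B$. I would then run the two complementary estimates: an analytic upper bound for $|\Phi^{(t)}(\ell)|$ on a large disc via the maximum-modulus principle and Schwarz's lemma --- this is where the assumed smallness of $|L|$ enters and makes $\Phi$ small --- together with an arithmetic Liouville inequality, stating that a nonzero derivative value, being an algebraic number, cannot be too small relative to its degree and height. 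Comparing the two forces a large collection of these values to vanish, and an extrapolation step iterates the comparison to propagate the vanishing to many more points and to higher order.

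The final ingredient is a zero (multiplicity) estimate of Philippon type on $\mathbb{G}_a\times\mathbb{G}_m^m$: one shows that $\Phi$ cannot vanish to the order produced by the extrapolation unless $\ga_1,\dots,\ga_m$ and $b_1,\dots,b_m$ satisfy an exceptional algebraic relation, and ruling this out --- using $\Lambda\neq 0$ and the multiplicative independence encoded in the data --- yields a contradiction. Reading off the resulting inequality between the parameters gives the lower bound for $|L|$, hence for $\log|\Lambda|$. I expect the main obstacle to be not the qualitative scheme but the explicit optimization: recovering the precise constant $4\times 30^{m+4}\times(m+1)^{5.5}\times D^{m+2}$, and the exact shape $\log(eD)\log(eB)\log A_1\cdots\log A_m$, requires simultaneously optimizing all of $L_0,\dots,L_m$, the order $T$, the disc radius, and the quality of the zero estimate. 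It is this balancing and bookkeeping, rather than any single conceptual step, that constitutes the bulk of the work, so a self-contained proof would be long; accordingly I would in practice invoke Matveev's theorem directly, as the paper does.
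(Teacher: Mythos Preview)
The paper does not prove this proposition at all: it is quoted verbatim from Matveev's work \cite{Matveev2000} and used as a black box in the proof of Theorem~\ref{th2}. Your outline of Baker's method (passage to the additive linear form, auxiliary construction via Siegel--Bombieri--Vaaler, Schwarz lemma versus Liouville inequality, extrapolation, Philippon-type zero estimate) is an accurate high-level sketch of how Matveev's theorem is actually established, and you correctly anticipate that the numerical constant is the hard part. But since the paper's ``proof'' is simply a citation, your proposal goes far beyond what is required here; your own closing remark, that in practice one invokes Matveev directly, is exactly what the paper does.
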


\begin{lemma}\label{lem13a} Suppose $\alpha$ is an algebraic number of degree $t$  and $\alpha = \alpha_1, \cdots, \alpha_t $ denotes its conjugates and $f_i(x)$'s are defined as in \eqref{eq6}. Suppose 
\[\Lambda_i := M \left(\prod_{j=1}^sp_j^{r_j}\right)(f_{1}(n_k))^{-1}\alpha_1^{-n_k}\left(1 + \frac{f_1(n_{k-1})}{f_1(n_{k})}\al_1^{n_{k-1} - n_k} + \cdots + \frac{f_1(n_{i})}{f_1(n_{k})}\al_1^{n_{i} - n_k} \right)^{-1} - 1\] 
for all $1\leq i \leq k-1$ and $1<\alpha_1\notin \mathbb{Z}$. If  $\Lambda_i = 0$, then 
\begin{equation}\label{eq26y}
n_k <  \begin{cases}
\frac{c_{18}\log n_k }{\log \alpha_1} &\quad \text{if} \;\;|\alpha_m| \leq 1\\
\frac{c_{19}\log n_k }{\log (\alpha_1/|\alpha_m|)} &\quad \text{if} \;\; |\alpha_m| > 1,
\end{cases}
\end{equation}where $c_{18}$ and $c_{19}$ are effectively computable constants depending on $\alpha$.\end{lemma}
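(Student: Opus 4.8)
The plan is to unwind the hypothesis $\Lambda_i=0$ into an exact identity and then estimate its two sides, once at the archimedean place and once through a Galois conjugate. Setting $N:=M\prod_{j=1}^s p_j^{r_j}$, which is a nonzero rational integer, the equation $\Lambda_i=0$ is equivalent to
\[
N=f_1(n_k)\alpha_1^{n_k}\Bigl(1+\tfrac{f_1(n_{k-1})}{f_1(n_k)}\alpha_1^{n_{k-1}-n_k}+\cdots+\tfrac{f_1(n_i)}{f_1(n_k)}\alpha_1^{n_i-n_k}\Bigr)=\sum_{\ell=i}^{k}f_1(n_\ell)\alpha_1^{n_\ell}.
\]
In particular $f_1(n_k)\neq 0$, so Lemma \ref{lem4}(ii) applies to the leading coefficient.

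First I would prove the lower bound $|N|\ge \tfrac12 c_4\alpha_1^{n_k}$ for all large $n_k$. Since $1<\alpha_1\notin\Z$ is the dominant root, it is real, and the term attached to a real root forces $f_1$ to be a real polynomial; being nonzero, $f_1(n)$ has a fixed sign for all $n\ge n_0$, where $n_0$ is effectively computable from $\alpha$. Splitting $N=N_1+N_2$ with $N_1=\sum_{n_\ell\ge n_0}f_1(n_\ell)\alpha_1^{n_\ell}$ and $N_2=\sum_{n_\ell<n_0}f_1(n_\ell)\alpha_1^{n_\ell}$, all terms of $N_1$ share one sign and the $n_k$-term lies among them, so $|N_1|\ge |f_1(n_k)|\alpha_1^{n_k}\ge c_4\alpha_1^{n_k}$ by Lemma \ref{lem4}(ii), while $|N_2|\le C_0$ for a constant $C_0$ depending only on $\alpha$. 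Hence $|N|\ge c_4\alpha_1^{n_k}-C_0\ge\tfrac12 c_4\alpha_1^{n_k}$ once $n_k$ is large. I expect this no-cancellation step to be the main obstacle: without the sign argument the consecutive indices $n_{k-1},n_k$ need not be separated, so the naive estimate that the bracketed factor is close to $1$ fails.

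For the upper bound I would exploit $N\in\Q$. Let $\sigma$ be an automorphism of the Galois closure of $\Q(\alpha_1)$ with $\sigma(\alpha_1)=\alpha_m$; by uniqueness of the representation \eqref{eq6}, $\sigma$ carries $f_1$ to the polynomial $f_m$ attached to $\alpha_m$, and since $\sigma(N)=N$ we obtain $N=\sum_{\ell=i}^{k}f_m(n_\ell)\alpha_m^{n_\ell}$. Lemma \ref{lem4}(ii) gives $|f_m(n_\ell)|\le c_5 n_\ell^{r-1}\le c_5 n_k^{r-1}$, whence
\[
|N|\le k\,c_5\,n_k^{r-1}\max\{1,|\alpha_m|^{n_k}\}.
\]
Comparing with the lower bound and taking logarithms yields $n_k\log\alpha_1\le (r-1)\log n_k+C$ when $|\alpha_m|\le 1$, and $n_k\log(\alpha_1/|\alpha_m|)\le (r-1)\log n_k+C$ when $|\alpha_m|>1$; here $\alpha_1>|\alpha_m|$ by dominance, so the coefficient of $n_k$ is positive. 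Absorbing the constant $C$ into the $\log n_k$ term for $n_k$ large then gives the two stated inequalities with $c_{18}$ and $c_{19}$ effectively computable from $r$, $c_4$, $c_5$ and $\alpha$, which completes the argument.
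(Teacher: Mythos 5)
Your proof is correct and follows essentially the same route as the paper: rewrite $\Lambda_i=0$ as the integer identity $N=\sum_{\ell=i}^{k}f_1(n_\ell)\alpha_1^{n_\ell}$, apply an automorphism sending $\alpha_1$ to a conjugate $\alpha_m\neq\alpha_1$ (which exists since $\alpha_1\notin\mathbb{Z}$), and compare the two resulting growth rates via Lemma~\ref{lem4}(ii). The only place you go beyond the paper is the lower bound $|N|\gg\alpha_1^{n_k}$: the paper simply asserts $|f_1(n_k)|\alpha_1^{n_k}<|f_1(n_k)\alpha_1^{n_k}+\cdots+f_1(n_i)\alpha_1^{n_i}|$ without addressing possible cancellation among the terms, whereas your fixed-sign argument for the real polynomial $f_1$ (splitting off the finitely many small indices) actually justifies that step.
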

\begin{proof}
  Now $\Lambda_i = 0$ imply
\begin{equation}\label{eq25}
M \left(\prod_{j=1}^sp_j^{r_j}\right) = f_{1}(n_k)\alpha_1^{n_k} + \cdots +f_1(n_{i})\al_1^{n_{i}}. 
\end{equation}


Since $\alpha_1\notin \mathbb{Z}$, then there exists a conjugate $\alpha_m$ of $\alpha_1$ in the field $\mathbb{Q}(\alpha_1, \ldots, \alpha_t)$ such that $\alpha_1\neq \alpha_m$. Therefore, on taking the $m$-th conjugate of both sides of \eqref{eq25}, we get
\begin{equation}\label{eq26}
M \left(\prod_{j=1}^sp_j^{r_j}\right)  = f_{m}(n_k)\alpha_m^{n_k}+ \cdots +f_m(n_{i})\al_m^{n_{i}}.
\end{equation}


Since $\alpha_1 > 1$ and by comparing \eqref{eq25} and \eqref{eq26},  
\begin{align*}
|f_{1}(n_k)|\alpha_1^{n_k} & < |f_{1}(n_k)\alpha_1^{n_k} + \cdots +f_1(n_{i})\al_1^{n_{i}}| = | f_{m}(n_k)\alpha_m^{n_k}+ \cdots +f_m(n_{i})\al_m^{n_{i}}|.
\end{align*}
By Lemma \ref{lem4}(ii) we obtain,
\begin{equation}\label{eq26z}
\alpha_1^{n_k} < c_{17} \sum_{j=i}^k n_j^{m_j-1}|\alpha_m|^{n_k}. 
\end{equation}
Then from \eqref{eq26z}, we have 
\begin{equation*}
n_k <  \begin{cases}
\frac{c_{18}\log n_k }{\log \alpha_1} &\quad \text{if} \;\;|\alpha_m| \leq 1\\
\frac{c_{19}\log n_k }{\log (\alpha_1/|\alpha_m|)} &\quad \text{if} \;\; |\alpha_m| > 1,
\end{cases}
\end{equation*}
where $c_{18}$ and $c_{19}$ are effectively computable constants depending on $\alpha$.
 \end{proof}
 
\begin{lemma}\label{lem14}
Let $(U_n)_{n\geq 0}$ be a non-degenerate recurrence sequence of integers of order $r\geq 2$. Suppose that $(U_n)_{n\geq 0}$ has a dominant root (say) $\alpha_1$ and $1<\alpha_1\notin \mathbb{Z}$. Let $k\geq 1$ be a positive integer and $p_1, \ldots, p_s$ be prime numbers such that $p_i\nmid M$. If
\begin{equation}\label{eq27}
U_{n_k} + \cdots + U_{n_1} = p_1^{r_1}\cdots q_s^{r_s}\cdot M
\end{equation}
with $n_k> \cdots > n_1$, then 
\[n_k < \left(c_{35} C_7^sQk\log (kQ)\right)^k (\log A).\]
\end{lemma}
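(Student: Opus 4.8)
The plan is to bound $n_k$ by feeding the linear forms $\Lambda_i$ of Lemma \ref{lem13a} into Matveev's estimate (Proposition \ref{lem12}), using the dominant root $\alpha_1$ to force each $\Lambda_i$ to be exponentially small, and then descending through the $k$ summands $U_{n_k},\ldots,U_{n_1}$ so that the successive conditional bounds compound into the $k$-th power in the statement. Throughout I write $N:=U_{n_k}+\cdots+U_{n_1}=M\prod_{j}p_j^{r_j}$.

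First I would record the size estimates that make $n_k$, the exponents $r_j$, and the heights of the bases comparable. By Lemma \ref{lem4}(ii)--(iii) each $|U_{n_\ell}|\le c\,n_\ell^{\,r-1}\alpha_1^{n_\ell}$ and $|f_1(n)|\ge c_4>0$, and after discarding the finitely many tuples $(n_1,\dots,n_k)$ on which some subsum vanishes (these are controlled by Skolem--Mahler--Lech and by Lemma \ref{lem13a}), the dominant term $f_1(n_k)\alpha_1^{n_k}$, whose sign is eventually fixed, governs the sum, so that $\log N=n_k\log\alpha_1+O(\log n_k)$. Consequently $n_k\asymp \log N/\log\alpha_1$ and each valuation obeys $r_j\le \log N/\log p_j\le c\,n_k$; this is what allows the exponents to enter the final bound only through $B$ and the quantity $Q$, and through the $s$ prime logarithms that produce the factor $C_7^{\,s}$.

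Next comes the degenerate alternative and the descent. If one of the $\Lambda_i$ vanishes, Lemma \ref{lem13a} already gives $n_k<c\log n_k/\log(\cdots)$, i.e.\ an absolute bound on $n_k$, so I may assume $\Lambda_i\neq 0$ for every relevant $i$. For the top index I would isolate the leading term: by the dominant-root estimates of Lemma \ref{lem4}, the one-term form $\frac{M\prod p_j^{r_j}}{f_1(n_k)\alpha_1^{n_k}}-1$ (equivalently $\Lambda_{k-1}$) is bounded above by $c\,n_k^{\,r-1}\alpha_1^{-(n_k-n_{k-1})}$ plus an exponentially small subdominant contribution, so $\log|\Lambda_{k-1}|\approx -(n_k-n_{k-1})\log\alpha_1$. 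Matveev's lower bound for the same quantity has the shape $-C_7^{\,s}(\log Q)\cdots\log(eB)$ with $B\ll n_k$, the logarithmic heights being supplied by $\alpha_1$ (exponent $n_k$), the primes $p_j$ (exponents $r_j\le Q$) and the slowly growing coefficient $f_1(n_k)$ (height $O(\log n_k)$). Comparing the two estimates controls the top gap $n_k-n_{k-1}$ by a quantity of the form $c\,C_7^{\,s}Q\log(kQ)$; replacing $n_k$ by $n_{k-1}$, pushing the remaining dominant tail onto the small side, and iterating down to $n_1$ bounds every gap by the same expression, whence multiplying the $k$ successive conditional bounds yields $n_k<\bigl(c_{35}C_7^{\,s}Qk\log(kQ)\bigr)^k(\log A)$.

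The hard part will be producing genuinely \emph{clean} linear forms at each level of the descent. The obstruction is that $\Lambda_i+1=M\bigl(\prod p_j^{r_j}\bigr)\bigl(\sum_{\ell=i}^{k}f_1(n_\ell)\alpha_1^{n_\ell}\bigr)^{-1}$ carries both the cofactor $M$ and the multi-term denominator, and \emph{neither} the height of $M$ (which is $\asymp n_k$) nor the height of the correction factor $1+\sum_{\ell}\tfrac{f_1(n_\ell)}{f_1(n_k)}\alpha_1^{\,n_\ell-n_k}$ (also $\asymp n_k$) may be admitted into Matveev's product of logarithmic heights, on pain of a vacuous inequality in which $n_k$ appears on both sides. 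The descent must therefore be arranged so that at each stage only the single power $\alpha_1^{n_k}$ occupies the denominator while the other dominant terms are relegated to the small side — this is exactly what forces the dichotomy ``large gap, apply Proposition \ref{lem12}'' versus ``small gap, absorb the term and shorten the sum'' — and so that the rational cofactor $M$ is removed, either by passing to the $p_j$-adic valuations (where $M$ becomes a unit and the $r_j$ are the quantities actually estimated) or by comparison with a conjugate in which $M$ cancels. Carrying out this bookkeeping uniformly in $k$, and verifying that the constants collapse into the stated shape $C_7^{\,s}Q\log(kQ)$, is the real technical crux; the size estimates of Lemma \ref{lem4} and the vanishing alternative of Lemma \ref{lem13a} are the routine ingredients.
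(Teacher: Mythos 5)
Your high-level strategy---isolate the dominant term $f_1(n_k)\alpha_1^{n_k}$, apply Matveev's theorem to the forms $\Lambda_1,\ldots,\Lambda_k$ of Lemma \ref{lem13a}, and let $k$ successive conditional bounds on the gaps $n_k-n_i$ compound into the exponent $k$---is exactly the paper's. The gap is that you stop at what you call ``the real technical crux'' and your diagnosis of that crux is mistaken, so the decisive step is left both unresolved and pointed in the wrong direction. You assert that neither the height of $M$ nor the height of the correction factor $1+\sum_{\ell}\tfrac{f_1(n_\ell)}{f_1(n_k)}\alpha_1^{\,n_\ell-n_k}$ may be admitted into Matveev's product of heights ``on pain of a vacuous inequality,'' and you propose to evade this via $p$-adic valuations or conjugation. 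Neither evasion is needed, and neither is carried out (a $p$-adic route would require Yu's theorem, not Proposition \ref{lem12}; conjugation is used in Lemma \ref{lem13a} only to rule out $\Lambda_i=0$, and it does not cancel $M$). The paper simply takes the whole coefficient $Mf_1(n_k)^{-1}(1+\cdots)^{-1}$ as one of the algebraic numbers $\gamma_j$, with height $O(\log A+(n_k-n_i))$, where $\log A:=\max\{h(Mf_1(n_k)^{-1}),2\}$ as in \eqref{eq29v}.

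Two observations, both missing from your proposal, make this legitimate. First, $\log A$ is not eliminated but carried as a parameter into the conclusion---this is precisely why the lemma's bound reads $n_k<(\cdots)^k(\log A)$ rather than being an absolute bound on $n_k$; the apparent circularity when $\log|M|\asymp n_k$ is resolved only afterwards, in the proof of Theorem \ref{th2}, where $n_k\ll(\cdots)^k\log|M|$ is reread as the lower bound $\log M\gg n_k(\cdots)^{-k}$, i.e.\ exactly the assertion $[U_j^{(k)}]_S<(U_j^{(k)})^{1-c_1}$. Second, the height contribution $n_k-n_i$ of the correction factor is not of order $n_k$: it is the quantity already bounded at the previous stage of the descent, so admitting it produces the recursion $n_k-n_{i-1}\ll C^sQ(\log A+(n_k-n_i))\log(n_k/\log A)$ of \eqref{eq33a}--\eqref{eq33b}, whose $k$-fold iteration followed by the final form $\Lambda_k$ and the elementary implication $X\le Y\log X\Rightarrow X\le 2Y\log Y$ gives the stated bound. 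Without these two points your ``large gap / small gap'' dichotomy cannot close: in the small-gap branch you must absorb the neighbouring term into the linear form, which is exactly the step you declared inadmissible.
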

\begin{proof}

From Lemma \ref{lem13a}, we  may assume $n_k > \ell$.   Rewrite \eqref{eq27} as 
\[M\cdot p_1^{r_1}\cdots p_s^{r_s} - f_1(n_k)\alpha_1^{n_k} = \sum_{i=2}^tf_{i}(n_k)\alpha_i^{n_k}+ \sum_{j=1}^{k-1}U_{n_j}.\]
Without loss of generality, we assume $|\alpha_2|\geq \cdots \geq |\alpha_t|$. By Lemma \ref{lem4}, the above equation becomes 
\begin{equation}\label{eq28}
|M\cdot p_1^{r_1}\cdots p_s^{r_s} - f_1(n_k)\alpha_1^{n_k}| < c_{20}n_k^{r-1}|\alpha_2|^{n_k} + c_{21}(k-1)n_k^{r-1}\alpha_1^{n_{k-1}}.
\end{equation}
Dividing both sides of \eqref{eq28} by $|f_1(n_k)\alpha_1^{n_k}|$, we have
\begin{equation}\label{eq28v}
|M\cdot p_1^{r_1}\cdots p_s^{r_s} f_1(n_k)^{-1}\alpha_1^{-n_k}- 1| \leq c_{22} n_k^{r}\alpha_1^{n_{k-1}-n_k}.
\end{equation}
Thus, from \eqref{eq28v}, we have
\begin{equation}\label{eq29}
\log |\Lambda_1| < -c_{23} (n_k- n_{k-1}) \log {n_k},
\end{equation}
where
\[\Lambda_1:= \left(\prod_{j=1}^sp_j^{r_j} \right)\alpha_1^{-n_k}\left(Mf_1(n_k)^{-1}\right)- 1,\] 
is our first linear form.
By Lemma \ref{lem13a}, $\Lambda_1$ is non-zero as $n_k>\ell$. Hence, we can apply Proposition \ref{lem12} to $\Lambda_1$ with $D\leq r^t, m= s+2, \ga_j = p_j, (1\leq j\leq s), \ga_{s+1} =  \alpha_1$ and $\gamma_{s+2}=  \left(Mf_1(n_k)^{-1}\right)$.  Set 
\begin{equation}\label{eq29v}
Q: = \prod_{i=1}^s \log p_i, \quad \log A: = \max\{h(Mf_1(n_k)^{-1}), 2\}.
\end{equation}
Using equation \eqref{eq27} and Lemma \ref{lem4}, one can show that $r_j \log p_j \leq (n_k+1)\log \alpha_1$.  Thus, we may choose $B = c_{24}n_k/\log A$. 
By applying Proposition \ref{lem12} with the above parameters we obtain that 
\begin{equation}\label{eq30}
\log |\Lambda_1| > -c_{25} C_1^sQ(\log A) \log \left(\frac{n_k}{\log A}\right).
\end{equation}
Thus, from \eqref{eq29} and \eqref{eq30}, we get
\begin{equation}\label{eq30a}
n_k-n_{k-1} \leq c_{26} C_2^sQ(\log A) \log \left(\frac{n_k}{\log A}\right).
\end{equation}
Proceeding  in this similar way, for $i = 2, \ldots,k-1$, we have 
\begin{align*}
M\cdot & p_1^{r_1}\cdots p_s^{r_s} - \left(\sum_{j =i}^{k}f_1(n_j)\alpha_1^{n_j}\right) = \sum_{j= i}^{k}\sum_{\ell=2}^tf_{\ell}(n_j)\alpha_{\ell}^{n_j}+ \sum_{j=1}^{i-1}U_{n_j}.
\end{align*}
This implies that
\begin{align*}\label{eq31}
\Lambda_i:=&\left|M\left(\prod_{j=1}^sp_j^{r_j} \right) f_1(n_k)^{-1}\alpha_1^{-n_k}\left( 1 + \frac{f_1(n_{k-1})}{f_1(n_{k})}\al_1^{n_{k-1} - n_k} + \cdots + \frac{f_1(n_{i})}{f_1(n_{k})}\al_1^{n_{i} - n_k} \right)^{-1} - 1\right| \\
&= \left|\left(\prod_{j=1}^sp_j^{r_j} \right) \frac{Mf_1(n_k)^{-1}}{\left( 1 + \frac{f_1(n_{k-1})}{f_1(n_{k})}\al_1^{n_{k-1} - n_k} + \cdots + \frac{f_1(n_{i})}{f_1(n_{k})}\al_1^{n_{i} - n_k}\right) }\alpha_1^{-n_k} - 1\right| \\
&\leq c_{27}(k-i)n_k^{r}\alpha_1^{n_{i-1}-n_k}
\end{align*}
and hence
\begin{equation}\label{eq32}
\log  |\Lambda_i| \leq  -c_{28}(n_k - n_{i-1} ) \log n_k.
\end{equation}
By Lemma \ref{lem13a}, $\Lambda_i$ is non-zero. By Proposition \ref{lem12}, we get
\begin{equation}\label{eq33}
\log |\Lambda_i| \geq -c_{29} C_3^sQ(\log A + |n_k- n_i|) \log\frac{ n_k}{\log A + n_k- n_i}.
\end{equation}
From, \eqref{eq32} and \eqref{eq33}, we have
\[n_k - n_{i-1} +\log A \leq c_{30} C_3^sQ(\log A + |n_k- n_i|)\log \frac{ n_k}{\log A + n_k- n_i}.\]
As a consequence, we have  
\begin{equation}\label{eq33a}
n_k - n_{1} \leq n_k - n_{1} + \log A\leq c_{30} C_3^sQ(\log A + |n_k- n_2|) \log\frac{ n_k}{\log A }.
\end{equation}
and for $i=3, \ldots, k-1$, we get 
\begin{equation}\label{eq33b}
n_k - n_{i-1} +\log A \leq c_{30} C_3^sQ(\log A + |n_k- n_i|) \log\frac{ n_k}{\log A }.
\end{equation}
Thus from \eqref{eq30a}, \eqref{eq33a} and \eqref{eq33b}, we get, 
\begin{align}\label{eq33c}
n_k- n_1\leq \left(c_{31} C_4^sQ\right)^{k-1}(\log A)\left(\log\frac{ n_k}{\log A }\right)^{k-1}.
\end{align}
Finally, we want to compute the upper bound of $n_k$.
Note that \begin{align*}
M\cdot & p_1^{r_1}\cdots p_s^{r_s} - \left(\sum_{j =1}^{k}f_1(n_j)\alpha_1^{n_j}\right) = \sum_{j= 1}^{k}\sum_{\ell=2}^tf_{\ell}(n_j)\alpha_{\ell}^{n_j}.
\end{align*}
We consider the following linear form
\begin{align*}\label{eq334}
\Lambda_k:=&\left|M\left(\prod_{j=1}^sp_j^{r_j} \right) f_1(n_k)^{-1}\alpha_1^{-n_k}\left( 1 + \frac{f_1(n_{k-1})}{f_1(n_{k})}\al_1^{n_{k-1} - n_k} + \cdots + \frac{f_1(n_{1})}{f_1(n_{k})}\al_1^{n_{1} - n_k} \right)^{-1} - 1\right| \\
&\leq c_{32}n_k^{r}(|\alpha_2|/\alpha_1)^{n_k}.
\end{align*}
By Lemma \ref{lem13a}, $\Lambda_k$ is non-zero. By Proposition \ref{lem12}, we get
\begin{equation}\label{eq35}
\log |\Lambda_k| \geq -c_{33} C_5^sQ \left(n_k- n_1 \right) \left(\log\frac{ n_k}{\log A}\right).
\end{equation}
Comparing the lower and upper bound of $\log |\Lambda_k|$ and using the upper bound for $n_k- n_1$, we obtain
\[n_k  \leq \left(c_{34} C_6^sQ\right)^k(\log A)\left(\log\frac{ n_k}{\log A}\right)^{k}.\]
Since $X\leq Y \log X$ implies $X\leq 2Y \log Y$ for all real number $X, Y\geq 3$, we have
\begin{equation}
n_k <  \left(c_{35} C_7^sQk\log (kQ)\right)^k (\log A).
\end{equation}
This completes the proof of  lemma.
\end{proof}

\subsection{Proof of Theorem \ref{th2}.}
From equation \eqref{eq29v},
\[\log A \leq \log |M| + c_{36}\log n_k.\]
If $|M|< n_{k}^{c_{36}}$, then $\log A\leq 2 c_{36}\log n_k$. By Lemma \ref{lem14}
\[n_k <  \left(c_{37} C_7^sQk\log (kQ)\right)^k (\log n_k)\]
and this implies 
\[n_k \leq \left(c_{38}C_8^sQk \log (kQ)^2\right).\]
Now assume that $|M|> n_{k}^{c_{36}}$, then $A\leq M^2$. From Lemma \ref{lem14} we get
\[n_k < \left(c_{39} C_7^sQk\log (kQ)\right)^k (\log |M|).\]
Further, 
\begin{equation*}
\frac{U_j^{(k)}}{[U_j^{(k)}]_S} = M \geq 2^{c_{40}n_k(C_8^sQk\log (kQ))^{-k}} \geq (U_j^{(k)})^{c_{41}(C_8^sQk\log (kQ))^{-k}},
\end{equation*}
and this implies
\[[U_j^{(k)}]_S < \left(U_j^{(k)}\right)^{1- c_{41}(C_8^sQk\log (kQ))^{-k}}.\]
This completes the proof of Theorem \ref{th2}.

\end{document}